\documentclass[12pt]{amsart}
\usepackage{ amsmath, amsthm, amsfonts, amssymb, color}
 \usepackage[colorlinks, citecolor=blue,pagebackref,hypertexnames=false]{hyperref}
 \allowdisplaybreaks
\textheight=22.8cm
\textwidth = 6.53 true in
\marginparsep=0cm
\oddsidemargin=0.0cm
\evensidemargin=0.0cm
\headheight=13pt
\headsep=0.8cm
\parskip=0pt
\hfuzz=6pt
\widowpenalty=10000
\setlength{\topmargin}{-0.6cm}

\begin{document}

\newtheorem{theorem}{Theorem}[section]
\newtheorem{proposition}[theorem]{Proposition}
\newtheorem{coro}[theorem]{Corollary}
\newtheorem{lemma}[theorem]{Lemma}
\newtheorem{definition}[theorem]{Definition}
\newtheorem{example}[theorem]{Example}
\newtheorem{remark}[theorem]{Remark}
\newcommand{\ra}{\rightarrow}
\renewcommand{\theequation}
{\thesection.\arabic{equation}}
\newcommand{\ccc}{{\mathcal C}}
\newcommand{\one}{1\hspace{-4.5pt}1}

 \def \Lips  {{   \Lambda}_{L}^{ \alpha,  s }(X)}
\def\BL {{\rm BMO}_{L}(X)}
\def\HAL { H^p_{L,{at}, M}(X) }
\def\HML { H^p_{L, {mol}, M}(X) }
\def\HM{ H^p_{L, {mol}, 1}(X) }
\def\Ma { {\mathcal M} }
\def\MM { {\mathcal M}_0^{p, 2, M, \epsilon}(L) }
\def\dMM { \big({\mathcal M}_0^{p,2, M,\epsilon}(L)\big)^{\ast} }
  \def\RR {  {\mathbb R}^n}
\def\HSL { H^p_{L, S_h}(X) }
\newcommand\mcS{\mathcal{S}}
\newcommand\mcB{\mathcal{B}}
\newcommand\D{\mathcal{D}}
\newcommand\C{\mathbb{C}}
\newcommand\N{\mathbb{N}}
\newcommand\R{\mathbb{R}}
\newcommand\G{\mathbb{G}}
\newcommand\T{\mathbb{T}}
\newcommand\Z{\mathbb{Z}}

\newcommand\CC{\mathbb{C}}
\newcommand\NN{\mathbb{N}}
\newcommand\ZZ{\mathbb{Z}}

\renewcommand\Re{\operatorname{Re}}
\renewcommand\Im{\operatorname{Im}}

\newcommand{\mc}{\mathcal}

\newcommand{\eps}{\varepsilon}
\newcommand{\pl}{\partial}
\newcommand{\supp}{{\rm supp}{\hspace{.05cm}}}
\newcommand{\x}{\times}
\newcommand{\mar}[1]{{\marginpar{\sffamily{\scriptsize
        #1}}}}
\newcommand{\as}[1]{{\mar{AS:#1}}}

\newcommand\wrt{\,{\rm d}}


\title[Spectral multipliers via resolvent type estimates]
{Spectral multipliers via resolvent type estimates  on non-homogeneous metric measure spaces}

\author{Peng Chen, \ Adam Sikora, \ and \ Lixin Yan}
\address{Peng Chen, Department of Mathematics, Sun Yat-sen (Zhongshan)
University, Guangzhou, 510275, P.R. China}
\email{chenpeng3@mail.sysu.edu.cn}
\address{
Adam Sikora, Department of Mathematics, Macquarie University, NSW 2109, Australia}
\email{adam.sikora@mq.edu.au}
\address{
Lixin Yan, Department of Mathematics, Sun Yat-sen (Zhongshan) University, Guangzhou, 510275, P.R. China}
\email{mcsylx@mail.sysu.edu.cn
}

\date{\today}
\subjclass[2000]{42B15, 42B20,   47F05.}
\keywords{Spectral multipliers, non-homogeneous type spaces, finite propagation speed property, $L^p$ spectrum independence, manifolds with ends, Schr\"odinger  operators. }

\begin{abstract}
We describe a simple but surprisingly effective technique of obtaining  spectral multiplier results for 
abstract operators which satisfy the  finite propagation speed property for the corresponding wave equation propagator. 
We show that, in this setting, spectral multipliers follow from resolvent type estimates.
 The most notable point of the paper is that our approach   is very flexible and can be
applied  even if the corresponding 
ambient space does not satisfy the doubling condition or if the semigroup generated by an operator 
is not uniformly bounded. As a corollary we obtain the $L^p$ spectrum independence for several second
order differential operators and recover some known results. Our examples include
 the Laplace-Belltrami operator on manifolds with ends and Schr\"odinger operators with strongly 
subcritical potentials.  
\end{abstract}

\maketitle


\section{Introduction}
\setcounter{equation}{0}

This paper is devoted to spectral multiplier theory, which is one of most significant areas  of Harmonic 
Analysis. In the  most general framework one considers a self-adjoint, non-negative usually unbounded 
operator $L$ acting on space $L^2(X,\mu)$ where $(X, d, \mu)$ is a metric measure space with metric
 $d$ and measure $\mu$. 

By spectral theory for any bounded Borel function $F$ one can define
the operator
$$
F(L)=\int_{0}^\infty F(\lambda) dE_{L}(\lambda)
$$
where $dE_{L}$ is a spectral resolution of the operator $L$. The $L^2(X)$ norm of the operator $F(L)$ is bounded 
by the $L^\infty$ norm of $F$. Then spectral multiplier theory asks what sufficient conditions are required to ensure that
the operator $F(L)$ extends to a bounded operator acting on spaces $L^p(X)$ for all $ 1\le p \le \infty$ 
or for some smaller  range of exponents $p$. The question is motivated by the problem of convergence of 
eigenfunction expansion and includes the Bochner-Riesz means analysis. 
This area  was at first inspired by celebrated  Fourier multiplier results of Mikhlin and H\"ormander, 
see \cite{Hor, Mik}, which initiated stating sufficient conditions for function $F$ in terms of its differentiability.

The spectral multiplier theory is   well developed and understood. However almost all results use the standard 
assumptions which include the doubling condition and uniform estimates for the corresponding 
semigroup. These two types of conditions are not completely natural and in fact there are many significant 
examples  which do not satisfy these assumptions and  we would like to investigate them.
 These two situations are  main points of interest of our approach.  {\it Firstly} we are able to study the 
 ambient spaces $(X, d, \mu)$ which do not satisfy the doubling condition, see \eqref{dc} and \eqref{vd} below. 
 In particular, we obtain a spectral multiplier result for Laplace-Beltrami operator acting on the class of 
 manifolds with ends described by  Grigor'yan and Saloff-Coste in \cite{GS}. The doubling condition usually fails 
 for the manifolds with ends. {\it A second point} is that we are able to treat   the case of operators for 
 which the corresponding semigroup does not 
satisfy uniform bounds for large time in the full range of $L^p$ spaces.  Examples of such situation come from 
investigation of strongly subcritical Schr\"odinger semigroups discussed by  Davies and Simon in \cite{DS} and by Murata in 
\cite{M2}. Investigation of Hodge Laplacian operators also often leads to semigroups which are not uniformly bounded on  $L^p$ spaces, see \cite{CouZ} and Remark~\ref{rm5.19} below. To the best of our knowledge the spectral multiplier results have not been studied before in the 
setting of manifolds with ends and strongly subcritical Schr\"odinger operators and our approach leads to 
first examples of such results.

We also would like to mention some other interesting features of the spectral multiplier techniques 
which we develop here. Our approach yields an alternative proof of the classical spectral multiplier 
result  for operators which generate semigroups with the standard Gaussian bounds, see e.g. 
\cite{COSY, CowS, DOS} and Remark \ref{rm5.112} below. 
Our discussion provides also alternative proofs of most of  Davies' results from \cite{Da} and 
improves some of estimates stated there. Related to this point is the issue of $L^p$ spectral independence
 of the operators which we study here. There are well-known
and important examples of operators of a similar nature to the ones which we investigate here, 
 where their spectra  depend upon the space $L^p(X,\mu)$ (see \cite{A, DST}).
 It is natural to ask whether the spectra of operators which we consider here also depend upon $p$.
Examples of results concerning the $L^p$ independence are described in  \cite{Da1, Da}. See also  the 
references therein.
As a corollary of our main result we prove the $L^p$ spectral independence in all settings which we 
consider here including the manifolds with ends. 

One of main assumptions of our results is the finite propagation speed for the corresponding wave 
equation propagator. We describe this standard notion in the next section. 
To be able to state our main result we define the volume function $V_r$ by the formula 
$$
V_r(x)=V(x,r) = \mu(B(x,r)),
$$
where $B(x,r)$ is a ball with radius $r$ and  center at $x$. 
Next, for a function $W \colon X \to \R$ we set $M_W$ to be operator of multiplication by W, that 
is $$M_W f(x)=W(x)f(x).$$ To simplify notion,  in what follows, we identify $W$ and the operator $M_W$ 
that is we denote $M_WT$ by $WT$ for any linear operator $T$. 

\smallskip

 Now our main result can be formulated in the following way.
 \begin{theorem}\label{thm1.1}
Suppose that $L$ a self-adjoint, non-negative operator which satisfies the finite propagation  speed property for
the corresponding wave equation.
Suppose next that for some exponents $\sigma>0$  and $\kappa \ge 0$
\begin{equation}\label{a1}\tag{$R_{\sigma, \kappa}$}
\|V^{1/2}_t(I+t^2L)^{-\sigma}\|_{2 \to \infty} \le C (1+t^2)^\kappa
\end{equation}
for all $t>0$.  Then

 i) There exists a constant $C>0$ such that
 \begin{eqnarray}\label{e3.55}
\|e^{i\xi tL}e^{-t L}\|_{1\to 1}\leq C(1+\xi^2)^{\sigma +\kappa+1/4} (1+t)^{\kappa}
\end{eqnarray}
for all $t>0$ and $\xi\in{\R}$;

ii) For  a bounded Borel function $F$ such that $\supp F\subset [-1, 1] $
and $F\in  H^s(\mathbb R)$  for some $s>2\sigma+2\kappa+1$,
the operator $F(L)$ is bounded on $L^p(X)$ for all $1\leq p\leq \infty$, and
 there exists constant $C=C(s)>0$ such that
 \begin{eqnarray}\label{e3.5555}
\|F(tL)\|_{p\to p} \leq C(1+t)^{\kappa}\|F\|_{H^{s}}
\end{eqnarray}
for all $t>0$.
\end{theorem}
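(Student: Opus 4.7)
The plan is to reduce (ii) to (i) via Fourier inversion, and to establish (i) by representing $e^{-zL}$ with $z=(1-i\xi)t$ as a superposition of wave propagators and combining finite propagation speed with the resolvent hypothesis.

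For (i), I would start from the subordination representation
$$
e^{-zL} = \frac{1}{\sqrt{\pi z}}\int_{0}^{\infty}e^{-s^{2}/(4z)}\cos(s\sqrt{L})\,ds,
$$
and decompose smoothly in the $s$-variable into pieces localized around $s\sim R_{j}=2^{j}R_{0}$ with $R_{0}\sim \sqrt{t}$. By finite propagation speed each piece $\phi_{R_{j}}(\sqrt{L})$ has kernel supported in $\{d(x,y)\le CR_{j}\}$. The factorization
$$
\|V_{R_{j}}^{1/2}\phi_{R_{j}}(\sqrt{L})\|_{2\to\infty}\le \|V_{R_{j}}^{1/2}(I+R_{j}^{2}L)^{-\sigma}\|_{2\to\infty}\,\|(I+R_{j}^{2}L)^{\sigma}\phi_{R_{j}}(\sqrt{L})\|_{2\to 2}
$$
then bounds the first factor by $C(1+R_{j}^{2})^{\kappa}$ via \eqref{a1} and the second by spectral calculus using Gaussian decay of the localized weight at scale $s\sim R_{j}$. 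The kernel-support condition combined with Cauchy-Schwarz against $V_{R_{j}}^{1/2}$ converts this weighted $L^{2}\to L^{\infty}$ estimate into an $L^{\infty}\to L^{\infty}$ bound, and duality (using $(e^{-zL})^{\ast}=e^{-\bar{z}L}$, which satisfies the same hypothesis) yields the matching $L^{1}\to L^{1}$ bound. Summing over $j$ and tracking the Gaussian concentration of the weight $G_{z}(s)=(\pi z)^{-1/2}e^{-s^{2}/(4z)}$ at scale $\sqrt{t(1+\xi^{2})}$ produces \eqref{e3.55}.

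For (ii), I would set $G(\mu):=F(\mu)e^{\mu}$, extended by zero outside $[-1,1]$, so that $F(tL)=G(tL)e^{-tL}$. Fourier inversion then gives
$$
F(tL)=\frac{1}{2\pi}\int_{\mathbb{R}}\hat{G}(\xi)\,e^{i\xi tL}e^{-tL}\,d\xi.
$$
Inserting \eqref{e3.55} and applying Cauchy-Schwarz produces
$$
\|F(tL)\|_{p\to p}\le C(1+t)^{\kappa}\|G\|_{H^{s}}\bigg(\int(1+\xi^{2})^{2\sigma+2\kappa+\frac{1}{2}-s}\,d\xi\bigg)^{1/2},
$$
with the second integral finite precisely when $s>2\sigma+2\kappa+1$. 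Smoothness of $e^{\mu}$ on $[-1,1]$ gives $\|G\|_{H^{s}}\le C\|F\|_{H^{s}}$; the $p=1$ bound and its dual $p=\infty$ counterpart extend to all $1\le p\le\infty$ by interpolation.

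The hard part will be the precise $\xi$-tracking in part (i). Because doubling is not assumed, volumes $V_{R_{j}}$ at different scales cannot be compared by any polynomial factor, so the truncation scales $R_{j}$ must line up exactly with the scales in \eqref{a1}, and the far-field remainder of the Fourier representation must be absorbed by purely spectral/Gaussian means rather than by any geometric doubling inequality. The fractional exponent $1/4$ in the $(1+\xi^{2})^{\sigma+\kappa+1/4}$ factor of \eqref{e3.55} should emerge from the prefactor $|z|^{-1/2}$ in the Gaussian representation, combined with the shift from the natural heat scale $\sqrt{t}$ to the complex-heat scale $\sqrt{t(1+\xi^{2})}$ dictated by the effective width of $G_{z}$.
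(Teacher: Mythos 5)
Your argument for part (ii) reproduces the paper's proof exactly: write $G(\lambda)=F(\lambda)e^{\lambda}$, expand $F(tL)=G(tL)e^{-tL}$ by Fourier inversion, insert estimate \eqref{e3.55}, apply Cauchy--Schwarz, and check that $\int(1+\xi^2)^{2\sigma+2\kappa+\frac12-s}\,d\xi<\infty$ exactly when $s>2\sigma+2\kappa+1$. The comparison $\|G\|_{H^s}\le C\|F\|_{H^s}$ using compact support, and the final interpolation/duality step, are also the same.

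For part (i) you take a genuinely different route, and this is where your proposal is incomplete in a way the paper's method sidesteps. You subordinate $e^{-zL}$ to the raw cosine propagator via $e^{-zL}=(\pi z)^{-1/2}\int_0^\infty e^{-s^2/(4z)}\cos(s\sqrt L)\,ds$ and then decompose dyadically in $s$. The obstruction is that the spectral symbol $\lambda\mapsto\cos(s\lambda)$ has no decay, so the factorization
$\|V_{R}^{1/2}T\|_{2\to\infty}\le\|V_R^{1/2}(I+R^2L)^{-\sigma}\|_{2\to\infty}\,\|(I+R^2L)^\sigma T\|_{2\to2}$
cannot be applied to $T=\cos(s\sqrt L)$ directly; you must first group the Gaussian weight with the cosine to form the localized pieces $\phi_{R_j}(\sqrt L)=\int\psi(s/R_j)\,e^{-s^2/(4z)}\cos(s\sqrt L)\,ds$. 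But the weight $e^{-s^2/(4z)}$ with $z=t(1-i\xi)$ carries the oscillatory phase $e^{-i\xi s^2/(4t(1+\xi^2))}$, whose $s$-derivatives grow with $\xi$; this degrades the decay of $\phi_{R_j}(\lambda)$ in $\lambda$, and it is not at all clear from your sketch that the resulting $L^2\to L^2$ factor stays under control uniformly in $\xi$, nor that the sum over $j$ reproduces the exponent $\sigma+\kappa+\frac14$. You correctly flag this as ``the hard part,'' but the plan leaves it unresolved, and the claimed origin of the $\frac14$ (just the $|z|^{-1/2}$ prefactor) does not by itself explain the $\sigma$ in the same exponent.

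The paper avoids all of this with a better-adapted subordination: starting from $\Gamma(a)^{-1}\int_0^\infty(s-x^2)_+^a e^{-s}\,ds=e^{-x^2}$ and taking Fourier transforms, one gets $e^{-\nu L}=\int_0^\infty F_a(\sqrt{sL})\,s^{a+\frac12}\,\nu^{-a-\frac32}e^{-s/(4\nu)}\,ds$ where $F_a$ is the Bochner--Riesz kernel (Fourier transform of $t\mapsto(1-t^2)_+^a$). This $F_a$ has \emph{both} compactly supported Fourier transform (so $\mathrm{supp}\,F_a(\sqrt{sL})\subseteq D_{\sqrt s}$ by finite speed) \emph{and} the classical decay $F_a(\lambda)=O(|\lambda|^{-1-a})$, and neither property depends on $\xi$. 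The complex parameter $\nu=t(1-i\xi)$ enters only the explicit scalar weight $\nu^{-a-\frac32}e^{-s/(4\nu)}$, so the factorization and the Cauchy--Schwarz kernel-support argument can be applied to $F_a(\sqrt{sL})$ once and for all, giving $\|F_a(\sqrt{sL})\|_{1\to1}\le C(1+s)^\kappa$ for $a\ge 2\sigma-1$, and the $\xi$-dependence is then extracted from a one-dimensional elementary integral. No dyadic decomposition, no $\xi$-dependent regularity to track, and the exponent $\sigma+\kappa+\frac14$ drops out of $a/2+\frac34$ at $a=2\sigma-1$. I would strongly recommend switching to this subordination; your plan can probably be completed, but it is substantially harder and may yield a worse exponent.
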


In Section \ref{sec5} we describe a number of applications of Theorem~\ref{thm1.1}, including the manifolds 
with ends and strongly subcritical Schr\"odinger operators,  which explain  significance of the above statement and rationale for assuming resolvent type estimate \eqref{a1}. Additional rationale for condition \eqref{a1} can be found in \cite[Proposition 2.3.4]{BCS}.

The comprehensive  list of relevant literature concerning spectral multipliers is enormous and too long to be included in this note  so we just refer readers to \cite{COSY, DOS}  and
references within as a possible  starting point for gathering complete  bibliography of the subject.

\section{Doubling condition and finite  propagation speed property}
\setcounter{equation}{0}

{\bf Doubling condition.} In our approach the doubling condition does not play essential role. 
However, because we use this notion in the discussion or in our results we recall it here. 
We say that metric measure space $(X, d, \mu)$ satisfies the doubling condition if there exists constant $C$ such that
\begin{equation}  \label{dc}
V(x,2r) \le CV(x,r)
\end{equation}
for all $x\in X$ and $r>0$. As a consequence,  there exist constants $C,n>0$ such that
\begin{equation}  \label{vd}
V(x, s r)\leq Cs^n V(x,r) \,\,\mbox{for all}\,\, s \geq 1,  r>0.
\end{equation}
Note that the doubling condition fails in the example which we consider in Section \ref{sub5.1} and it 
may or may not  be satisfied by $(X, d, \mu)$ in Section \ref{sub5.2}. 
Metric measure spaces which satisfy the doubling 
condition are often called {\em homogenous spaces}. 

The following statement provides another rationale for condition \eqref{a1} and will be used in 
Sections \ref{sub5.3} and \ref{sub5.4}. 

\begin{lemma}\label{prop2}
Assume that space $(X,d,\mu)$ satisfies the doubling condition \eqref{vd} with exponent $n$.
Then for all $\sigma>n/4$ and $\kappa\geq 0$, condition  \eqref{a1} is equivalent with the following estimate
\begin{equation}\label{a22}
\|V^{1/2}_te^{-t^2L}\|_{2 \to \infty} \le C (1+t^2)^\kappa.
\end{equation}
\end{lemma}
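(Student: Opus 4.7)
The idea is to pass between the resolvent $(I+t^2L)^{-\sigma}$ and the heat operator $e^{-t^2L}$ via standard spectral calculus, using the doubling condition only to reconcile the weight $V_t^{1/2}$ with itself at rescaled times.

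\textbf{Direction \eqref{a22} $\Longrightarrow$ \eqref{a1}.} I would start from the subordination identity
\begin{equation*}
(I+t^2L)^{-\sigma}=\frac{1}{\Gamma(\sigma)}\int_0^\infty s^{\sigma-1}e^{-s}\,e^{-st^2L}\,ds,
\end{equation*}
obtained by applying $(1+\lambda)^{-\sigma}=\Gamma(\sigma)^{-1}\int_0^\infty s^{\sigma-1}e^{-s(1+\lambda)}\,ds$ to $\lambda=t^2L$. Setting $u=t\sqrt{s}$, hypothesis \eqref{a22} yields $\|V_u^{1/2}e^{-u^2L}\|_{2\to\infty}\le C(1+st^2)^\kappa$. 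The weight $V_t^{1/2}$ must then be compared with $V_u^{1/2}=V_{t\sqrt{s}}^{1/2}$ pointwise: for $s\ge1$ monotonicity of $V$ in $r$ gives $V_t^{1/2}\le V_{t\sqrt{s}}^{1/2}$, while for $s\le1$ the doubling estimate \eqref{vd} with scale factor $s^{-1/2}\ge 1$ gives $V_t^{1/2}\le Cs^{-n/4}V_{t\sqrt{s}}^{1/2}$. Combining,
\begin{equation*}
\|V_t^{1/2}e^{-st^2L}\|_{2\to\infty}\le C(1+s^{-n/4})(1+st^2)^\kappa.
\end{equation*}
Using $(1+st^2)^\kappa\le C_\kappa(1+s^\kappa)(1+t^2)^\kappa$, substituting and integrating in $s$ gives
\begin{equation*}
\|V_t^{1/2}(I+t^2L)^{-\sigma}\|_{2\to\infty}\le C(1+t^2)^\kappa\int_0^\infty s^{\sigma-1}e^{-s}(1+s^{-n/4})(1+s^\kappa)\,ds.
\end{equation*}
The integral is finite precisely because $\sigma>n/4$, which secures integrability at $s=0$; this is the only place the hypothesis $\sigma>n/4$ enters.

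\textbf{Direction \eqref{a1} $\Longrightarrow$ \eqref{a22}.} This direction is essentially trivial and uses neither doubling nor the restriction $\sigma>n/4$. Factor on the spectral side:
\begin{equation*}
e^{-t^2L}=(I+t^2L)^{-\sigma}\bigl[(I+t^2L)^\sigma e^{-t^2L}\bigr].
\end{equation*}
Since $\sup_{\lambda\ge0}(1+\lambda)^\sigma e^{-\lambda}<\infty$, the bracketed operator is bounded on $L^2$ uniformly in $t$ by the spectral theorem. Hence
\begin{equation*}
\|V_t^{1/2}e^{-t^2L}\|_{2\to\infty}\le\|V_t^{1/2}(I+t^2L)^{-\sigma}\|_{2\to\infty}\,\bigl\|(I+t^2L)^\sigma e^{-t^2L}\bigr\|_{2\to2}\le C(1+t^2)^\kappa,
\end{equation*}
which is \eqref{a22}.

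The only genuine obstacle is the bookkeeping in the first direction: one must control the discrepancy between $V_t$ and $V_{t\sqrt s}$ uniformly in $s\in(0,\infty)$ without picking up factors of $t$ that would spoil the bound $(1+t^2)^\kappa$. The doubling dimension $n$ enters through the singular factor $s^{-n/4}$ for small $s$, and the threshold $\sigma>n/4$ is exactly what makes this singularity integrable against $s^{\sigma-1}$.
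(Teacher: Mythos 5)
Your proof is correct, and since the paper omits a detailed argument (referring only to a ``minor modification of Proposition 2.3.4 of [BCS]''), your subordination argument $(I+t^2L)^{-\sigma}=\Gamma(\sigma)^{-1}\int_0^\infty s^{\sigma-1}e^{-s}e^{-st^2L}\,ds$ combined with the doubling estimate to compare $V_t$ with $V_{t\sqrt s}$ is precisely the standard route that the cited result follows; indeed the paper itself displays your ``easy direction'' verbatim in the proof of Proposition~\ref{coro5.12}. Both directions check out, including the correct identification of where $\sigma>n/4$ is needed (integrability of $s^{\sigma-n/4-1}$ near $s=0$) and the observation that the reverse implication requires neither doubling nor the threshold on $\sigma$.
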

\begin{proof}
The proof is very simple. For example it is just a minor modification of the proof of Proposition~2.3.4 of \cite{BCS}.
\end{proof}

{\bf Finite  propagation speed.} Let $L$ be a self-adjoint non-negative operator acting
on $L^2(X).$
Following  \cite{CouS},  we say that $(X,d,\mu,L)$, or in short $L$, satisfies the finite propagation  speed property for
 the corresponding wave equation propagator if
\begin{equation}  \label{fs11}
\langle \cos(r\sqrt L) f_1, f_2 \rangle = 0
\end{equation}
for all $f_i\in L^2(B_i,\mu)$, $i=1,2$, where $B_i $ are  open balls in $X$ such that  $d(B_1,B_2)>r>0$.

We shall use the following notational convention, see \cite{Si}.  For $r>0$, set
\begin{equation*}
D_r=\{ (x,\, y)\in X\times X: {d}(x,\, y) \le r \}.
\end{equation*}
Given a linear operator $T$ from $L^2(X,\mu)$  to $L^2(X,\mu)$, we   write
\begin{equation}\label{e2.1}
\text{supp}\,T  \subseteq D_r
\end{equation}
 if $\langle T f_1, f_2 \rangle = 0$ whenever  $f_1\in L^2(B_1,\mu)$, $f_2\in L^{2}(B_2,\mu)$,
 and $B_1,B_2$ are balls such that
 $d(B_1,B_2)> r$.   Note that  if $T$ is an integral operator with   kernel
$K_T$, then \eqref{e2.1} coincides with the  standard meaning of
$\text{supp} \, K_{T}  \subseteq D_r$,
 that is $K_T(x, \, y)=0$ for all $(x, \, y) \notin D_r$. Now we can state the finite  propagation speed
 property \eqref{fs11} in the following way
 \begin{equation}\label{fs}\tag{FS}
\text{supp} \, \cos(r\sqrt L) \subseteq D_r,\quad\forall\,r>0.
\end{equation}

Property \eqref{fs} holds for most of second order self-adjoint operators  and  it
is equivalent to Davies-Gaffney
estimates, see estimate \eqref{dg} below and \cite{Si, CouS}. See also  \cite{CGT}
for earlier examples of the finite propagation speed property technique. Examples of application of the  finite speed
propagation techniques to the spectral multiplier theory can be found in \cite{CowS, COSY}. 

In what follows we will need the following straightforward observation, see\cite{COSY, Si}. 

  \begin{lemma}\label{nos}
Assume that $L$ satisfies the finite propagation  speed property \eqref{fs} for
the corresponding wave equation.
  Let $\Phi\in L^1({\R})$ be  an even function such that
 $\mbox{\em supp}\,\widehat\Phi \subset [-1,1].$  Then 
  \begin{equation}
 \mbox{\em supp} \, {\Phi(r\sqrt L)} \subseteq D_r
\label{noseq}
\end{equation}
 for all $r>0$.
  \end{lemma}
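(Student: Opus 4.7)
The strategy is to realise $\Phi(r\sqrt{L})$ as a superposition of the cosine propagators $\cos(sr\sqrt{L})$ for $|s|\le 1$, and then invoke \eqref{fs} separately for each such parameter. Since $\Phi$ is even and $\widehat\Phi$ is bounded with compact support (hence integrable), Fourier inversion gives
\[
\Phi(\xi) \;=\; \frac{1}{\pi}\int_0^{\infty} \widehat\Phi(s)\cos(s\xi)\,ds
\]
for (the continuous representative of) $\Phi$. Applying the spectral theorem to the self-adjoint operator $r\sqrt{L}$ and moving the spectral integral through the outer Bochner integral yields
\[
\Phi(r\sqrt{L}) \;=\; \frac{1}{\pi}\int_0^{1} \widehat\Phi(s)\cos(sr\sqrt{L})\,ds,
\]
as a bounded operator identity on $L^2(X)$, where the cut-off to $[0,1]$ uses $\supp\widehat\Phi\subset[-1,1]$.

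Next I would test this identity against $f_1\in L^2(B_1,\mu)$ and $f_2\in L^2(B_2,\mu)$ with $d(B_1,B_2)>r$:
\[
\langle \Phi(r\sqrt{L})f_1,f_2\rangle
\;=\; \frac{1}{\pi}\int_0^{1} \widehat\Phi(s)\,\langle \cos(sr\sqrt{L})f_1,f_2\rangle\,ds.
\]
For every $s\in[0,1]$ one has $sr\le r<d(B_1,B_2)$, so assumption \eqref{fs} immediately gives $\langle \cos(sr\sqrt{L})f_1,f_2\rangle=0$. Hence the right-hand side vanishes, establishing \eqref{noseq}.

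\textbf{Main obstacle.} The only non-routine issue is justifying the interchange of the Fourier integral with the spectral calculus (equivalently, Fubini in the pairing above). This is handled by noting that $s\mapsto\cos(sr\sqrt{L})$ is a strongly continuous one-parameter family of contractions on $L^2(X)$, so integrating it against the bounded compactly supported weight $\widehat\Phi$ produces a well-defined Bochner integral whose spectral representative coincides with $\Phi$; uniqueness of the functional calculus then identifies that Bochner integral with $\Phi(r\sqrt{L})$. Everything else reduces to the pointwise vanishing supplied by \eqref{fs}.
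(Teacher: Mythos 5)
Your proof is correct and follows essentially the same route as the paper: Fourier inversion expresses $\Phi(r\sqrt L)$ as an integral of $\cos(sr\sqrt L)$ over $s\in[-1,1]$ (equivalently $[0,1]$ by evenness), and finite propagation speed kills each integrand when tested against functions supported on balls at distance greater than $r$. The paper's proof is a terser version of the same argument; your extra remarks on the Bochner-integral justification and the monotonicity $D_{sr}\subseteq D_r$ simply make explicit what the paper leaves implicit.
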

  \begin{proof}
If $\Phi$ is an even function, then by the Fourier inversion formula,
\begin{equation*}\label{formu}
\Phi(r\sqrt L) =\frac{1}{2\pi}\int_{-\infty}^{+\infty}%
  \widehat{\Phi}(s) \cos(rs\sqrt L) \;ds.
\end{equation*}
However,  $\supp\hat{\Phi} \subseteq [-1, 1]$
so the lemma follows from \eqref{fs}.
\end{proof}

 \section{Spectral multiplier theorems - the proof of the main result}
\setcounter{equation}{0}

This section is devoted to the proof of  Theorem \ref{thm1.1}, which is based on the wave equation technique.

\begin{proof}[Proof of Theorem \ref{thm1.1} ]
First, we prove estimate~\eqref{e3.55}.
A direct calculation as in \cite[page 360]{BCS} shows that for all $a>0$, $x\in {\R}$,
$$
\frac{1}{\Gamma(a)}\int_0^\infty(s-x^2)^a_+e^{-s}\,ds=e^{-x^2},
$$
where
$$
(t)_+=t \quad \mbox{if} \quad t \ge 0 \quad \quad \mbox{and}\quad  (t)_+=0 \quad  \mbox{if} \quad
t<0.
$$
Hence
$$
C_a\int_0^\infty\left(1-\frac{x^2}{s}\right)^a_+e^{-{s\over 4}}s^a \,ds=e^{-{x^2\over 4}}
$$
for some suitable $C_a>0$.
Taking the Fourier transform of both sides of the above inequality yields
$$
\int_0^\infty F_a(\sqrt s\xi)  s^{a+\frac{1}{2}}e^{-{s\over 4}}ds=e^{-\xi^2},
$$
where $F_a$ is the Fourier transform of the function $t \to (1-{t^2})^a_+$ multiplied
by the appropriate constant. Hence, by spectral theory for $\nu>0$
$$
e^{-\nu L}=\int_0^\infty F_a(\sqrt{s\nu L}\,)  s^{a+\frac{1}{2}}e^{-{s\over 4}}ds.
$$
Thus we can rewrite $e^{i\xi t L}e^{-t L}$ as
\begin{eqnarray}\label{e4.11}
e^{i\xi t L}e^{-t L}=\int_0^\infty F_a(\sqrt{sL}\,)
s^{a+\frac{1}{2}}(t-i\xi t)^{-a-\frac{3}{2}}\exp\left({-\frac{s}{4(t-i\xi t)}}\right) ds.
\end{eqnarray}
Note that $\supp \widehat{F_a}\subset [-1,1]$. By Lemma \ref{nos}
\begin{equation*}
 \mbox{
  supp} \, {F_a(s\sqrt {L}\,) } \subseteq D_s,\ \forall
 \,s>0.
 \end{equation*}
Thus
\begin{eqnarray}\label{e4.33}
\|F_a(\sqrt{sL})\|_{1\to 1}& = &\sup_y\int_X |K_{F_a(\sqrt{sL}\,) }(x,y)|d\mu(x)\nonumber\\
&=&\sup_y\int_{B(y,\sqrt s)} |K_{F_a(\sqrt{sL}\,) }(x,y)|d\mu(x)\nonumber\\
&\leq & \sup_y \mu(B(y,\sqrt{s}))^{1/2} \left(    \int_{X} |K_{F_a(\sqrt{sL}\,) }(x,y)|^2d\mu(x)\right)^{1/2}\nonumber\\
&=& \|V^{1/2}_{\sqrt{s}}F_a(\sqrt{sL}\,) \|_{2\to \infty}\\
&\leq& \|V^{1/2}_{\sqrt{s}}(1+sL)^{-\sigma}\|_{2\to \infty}
\|(1+sL)^{\sigma}F_a(\sqrt{sL}\,) \|_{2\to 2}\nonumber\\
&\leq& \|V^{1/2}_{\sqrt{s}}(1+sL)^{-\sigma}\|_{2\to \infty}
\|(1+\lambda^2)^{\sigma}F_a(\lambda\,) \|_{L^\infty}\nonumber.
\end{eqnarray}
Next we estimate the second term of the last inequality above that is  $L^\infty$ norm of $(1+\lambda^2)^{\sigma}F_a(\lambda\,)$. For this purpose we recall the
following well-known asymptotic for Bessel type functions, often used in discussion of 
the kernel of the standard  Bochner-Riesz operators, see e.g.
 page  391 of \cite{St}: 
 $F_a(\lambda)$ is bounded and has the following asymptotic expansion
 \begin{eqnarray}\label{e2.7}
 F_a(\lambda)
 \sim |\lambda|^{-1-a}\left[e^{2\pi i|\lambda|}
 \sum_{j=0}^\infty \alpha_j |\lambda|^{-j}+e^{-2\pi i|\lambda|}
 \sum_{j=0}^\infty \beta_j |\lambda|^{-j}\right]
 \end{eqnarray}
 as $|\lambda|\to \infty$, for suitable constants $\alpha_j$ and $\beta_j$.
 
Hence  $\|(1+\lambda^2)^{\sigma}F_a(\lambda\,)\|_{\infty} < \infty$ as soon as $2\sigma-1\leq a$.
Now if $\kappa$ is an exponent from \eqref{a1} and  $2\sigma-1\leq a$ then by  \eqref{e4.33} 
\begin{eqnarray}\label{e4.22}
\|F_a(\sqrt{sL}\,) \|_{1\to 1}
  \leq  C(1+ s)^{\kappa}.
  \end{eqnarray}
By \eqref{e4.11} and \eqref{e4.22}
\begin{eqnarray*}
\|e^{it\xi L}e^{-t L}\|_{1\to 1}
&\leq & \int_0^\infty (1+s)^{\kappa}
s^{a+\frac{1}{2}}(t^2+(\xi t)^2)^{-{a\over 2}-\frac{3}{4}}\exp\left({-\frac{s}{4(t+t \xi^2)}}\right)ds\\
&\leq & C(1+t)^{\kappa} (1+\xi^2)^{{a\over 2}+{\kappa}+\frac{3}{4} }.
\end{eqnarray*}
Finally,  noting that  $2\sigma-1\leq a$, we obtain \eqref{e3.55}.

To prove~\eqref{e3.5555}
we write $G(\lambda)=F(\lambda)e^{\lambda}$, and then  $F(tL)=G(tL)e^{-tL}.$  Hence,
$$
F(tL)=G(tL)e^{-tL}=\int_{\mathbb{R}} \widehat{G}(\xi) e^{it\xi L}e^{-tL}\,d\xi.
$$
Note that $s>2\sigma+2\kappa+1$ so by \eqref{e3.55}
\begin{eqnarray*}
\|F(tL)\|_{1\to 1}&\leq& \int_{\mathbb{R}} |\widehat{G}(\xi)| \|e^{it\xi L}e^{-tL}\|_{1\to 1}\,d\xi\\
&\leq& C\int_{\mathbb{R}} |\widehat{G}(\xi)|(1+\xi^2)^{\sigma +\kappa+1/4} (1+t)^{\kappa}\,d\xi\\
&\leq&C (1+t)^{\kappa}\|G\|_{H^s} \left(\int_{\mathbb{R}} (1+\xi^2)^{\sigma +\kappa+1/4-s/2}\,d\xi\right)^{1/2}\\
&\leq& C(1+t)^{\kappa}\|G\|_{H^s}.
\end{eqnarray*}
Observe that $\supp F\subset [-1, 1]$ so  $ \|G\|_{H^s}\leq C\|F\|_{H^s}$ and 
$$
\|F(tL)\|_{1\to 1} \leq C_s(1+t)^{\kappa}\|G\|_{H^s}\leq C_s(1+t)^{\kappa}\|F\|_{H^s}.
$$
Since $F$ is a bounded Borel function, $F(L)$ is bounded on $L^2(X)$. By interpolation and duality,
$F(L)$ is bounded on $L^p(X)$ for $1\leq p\leq\infty.$
This ends the proof of Theorem~\ref{thm1.1}.
\end{proof}

In the next section, in our discussion of spectral independence of operator $L$ we shall use 
the following corollary of Theorem~\ref{thm1.1}  which states the spectral multiplier result
 for non-compactly supported functions. Recall that $[x]$ stands for the integer part of $x\in \R$.  
\begin{coro}\label{coro1.1}
Suppose that $L$ satisfies the finite propagation  speed property for
the corresponding wave equation and that  for some $\sigma>0$ ,$\kappa \ge 0$ resolvent type 
estimate \eqref{a1} holds. 
 Next assume that  for function $F$ on $\mathbb{R}$,  there exists a constant $C>0$
such that  for $m=0, 1,, \cdots, [2\sigma+2\kappa+1]+1$,
\begin{eqnarray}\label{e1.1}
  \sup_{0\leq \lambda<1}\big|{d^{m}\over d\lambda^m} F(\lambda)\big|\leq C
\end{eqnarray}
and  for some  $\epsilon>0$
\begin{eqnarray}\label{e1.2}
\sup_{\lambda\geq 1}\big|\lambda^{m+\epsilon} {d^{m}\over d\lambda^m} F(\lambda)\big|\leq C.
\end{eqnarray}
Then the operator $F(L)$ is bounded on $L^p(X)$ for all  $1\leq p\leq\infty$.
 \end{coro}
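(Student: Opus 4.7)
The plan is to apply Theorem \ref{thm1.1} to a dyadic decomposition of $F$ adapted to the dichotomy \eqref{e1.1}--\eqref{e1.2}. Fix $\phi_0 \in C_c^\infty(\R)$ with $\phi_0 \equiv 1$ on $[-1,1]$ and $\supp\phi_0 \subset [-2,2]$, and set $\phi_k(\lambda) := \phi_0(2^{-k}\lambda) - \phi_0(2^{-(k-1)}\lambda)$ for $k \ge 1$, so that $\sum_{k \ge 0} \phi_k \equiv 1$ on $\R$ with $\supp \phi_k \subset \{2^{k-1} \le |\lambda| \le 2^{k+1}\}$. Since $F$ is bounded on $[0,\infty)$ (which contains the spectrum of $L$), the series $F(L) = \sum_{k \ge 0} (F\phi_k)(L)$ converges strongly on $L^2(X)$, so it suffices to produce a summable bound on $\|(F\phi_k)(L)\|_{p \to p}$ for each $1 \le p \le \infty$.

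To feed each piece into Theorem \ref{thm1.1}(ii), I rescale: let $G_k(\lambda) := (F\phi_k)(2^{k+1}\lambda)$, which is supported in $[-1,1]$ by construction, and note that $(F\phi_k)(L) = G_k(2^{-(k+1)}L)$ by the spectral calculus. Picking the integer $s := [2\sigma+2\kappa+1]+1$, which is the smallest integer strictly greater than $2\sigma+2\kappa+1$ and is precisely the order of differentiability assumed in \eqref{e1.1}--\eqref{e1.2}, Theorem \ref{thm1.1}(ii) applied with $t = 2^{-(k+1)} \le 1$ yields
\[
\|(F\phi_k)(L)\|_{p \to p} \le C (1+2^{-(k+1)})^\kappa \|G_k\|_{H^s} \le C \|G_k\|_{H^s}.
\]

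The crux of the argument is to show $\|G_k\|_{H^s} \lesssim 2^{-k\epsilon}$ for $k \ge 1$, which is exactly where hypothesis \eqref{e1.2} is used. For $\lambda \in \supp G_k$ one has $|2^{k+1}\lambda| \ge 2^{k-1} \ge 1$, so \eqref{e1.2} gives $|F^{(j)}(2^{k+1}\lambda)| \lesssim 2^{-k(j+\epsilon)}$, while the standard scaling bounds yield $|\phi_k^{(\ell)}(\mu)| \lesssim 2^{-k\ell}$ on $\supp \phi_k$. A Leibniz expansion of $\partial_\lambda^m G_k(\lambda) = (2^{k+1})^m \partial_\mu^m(F\phi_k)|_{\mu = 2^{k+1}\lambda}$ causes all positive powers of $2^k$ to cancel against the decay factors, leaving $\|G_k\|_{C^s} \lesssim 2^{-k\epsilon}$; the compact support of $G_k$ in $[-1,1]$ then upgrades this to $\|G_k\|_{H^s} \lesssim 2^{-k\epsilon}$. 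The low-frequency piece $G_0$ is handled identically using \eqref{e1.1} to obtain $\|G_0\|_{H^s} \le C$. Summing the geometric series $\sum_{k \ge 0} 2^{-k\epsilon}$ delivers the stated boundedness on $L^p$ for every $1 \le p \le \infty$.

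The main obstacle is really just bookkeeping: one must choose the rescaling factor $2^{k+1}$ (rather than $2^k$) so that $G_k$ lies in $[-1,1]$ as required by Theorem \ref{thm1.1}(ii), and then carefully balance the powers of $2^k$ produced by differentiating $F(2^{k+1}\lambda)$ against the decay supplied by \eqref{e1.2}. A minor tacit assumption is that $F$ admits a $C^s$ extension to $\R$ satisfying analogous bounds on $(-\infty,0)$, which is legitimate because the spectrum of $L$ is contained in $[0,\infty)$ and hence any such extension leaves $F(L)$ unchanged.
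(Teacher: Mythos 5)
Your proof is correct and follows essentially the same strategy as the paper: a dyadic Littlewood--Paley decomposition of $F$, rescaling each piece into a function supported in $[-1,1]$, applying Theorem~\ref{thm1.1}(ii) with Sobolev index $s=[2\sigma+2\kappa+1]+1$, bounding the $H^s$ norms via the derivative conditions \eqref{e1.1}--\eqref{e1.2}, and summing the resulting geometric series. You fill in the bookkeeping that the paper leaves implicit (the explicit calculation showing $\|G_k\|_{H^s}\lesssim 2^{-k\epsilon}$), and you correctly fix the paper's misstatement ``$s>[2\sigma+2\kappa+1]+1$'', which should read $s=[2\sigma+2\kappa+1]+1$ (this is the largest value of $s$ for which \eqref{e1.1}--\eqref{e1.2} control the required Sobolev norms, and it does satisfy $s>2\sigma+2\kappa+1$ as Theorem~\ref{thm1.1}(ii) requires). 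One tiny imprecision: your $G_0$ is supported where $|\lambda|\le 2$, so on the subregion $1\le\lambda\le 2$ you actually need \eqref{e1.2} as well as \eqref{e1.1} to conclude $\|G_0\|_{H^s}\le C$; this is immaterial but worth noting for accuracy.
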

\begin{proof}
Let   $\phi \in C_c^{\infty}(\mathbb R) $   be  a function such that
$\supp \phi\subseteq \{ \lambda: 1/4\leq |\lambda|\leq 1\}$ and
 $
\sum_{\ell\in \ZZ} \phi(2^{-\ell} \lambda)=1$ for all
${\lambda>0}.
$
Set $\phi_0(\lambda)= 1-\sum_{\ell=1}^{\infty} \phi(2^{-\ell} \lambda)$ and
$$
F(\lambda)=F(\lambda)\phi_0(\lambda)+\sum_{\ell=1}^{\infty} F(\lambda) \phi(2^{-\ell} \lambda).
$$
Set $\delta_tF(\lambda)=F(\lambda t)$. It follows from ii) of Theorem~\ref{thm1.1} that for $s>[2\sigma+2\kappa +1]+1$,
\begin{eqnarray*}
\|F(L) \phi(2^{-\ell} L)\|_{1\to 1} \leq
  (1+2^{-\ell})^{\kappa}\|\phi\delta_{2^{\ell}} F\|_{H^{s}},\ \ \ \ \ell=1,2,\cdots,
\end{eqnarray*}
which yields
\begin{eqnarray*}
\|F(L)\|_{1\to 1} \leq C\|F\phi_0\|_{H^s}
+C\sum_{\ell=1}^\infty (1+2^{-\ell})^{\kappa}\|\phi\delta_{2^{\ell}} F\|_{H^{s}}
\leq C
\end{eqnarray*}
when $F$ satisfies conditions \eqref{e1.1} and \eqref{e1.2}.

Since $F$ is a bounded Borel function, $F(L)$ is bounded on $L^2(X)$. By interpolation and duality,
$F(L)$ is bounded on $L^p(X)$ for $1\leq p\leq\infty.$
\end{proof}

 \section{$L^p$-spectral independence  }
\setcounter{equation}{0}

In this section, we assume that operator $L$ is a non-negative self-adjoint operator on
$L^2(X)$ such that $e^{-tL}$ can be extended to a strongly continuous one-parameter
semigroup on $L^p(X)$ for all $1\leq p<\infty$.
For $1\leq p<\infty$ we denote by $L_p$ the generator of the considered semigroup acting on $L^p(X)$ space 
  and by  Spec$(L_p)$ its spectrum. This means that $L_p$ and $L_q$ coincide on $L^p\cap L^q$ and so if $z \notin ${\rm Spec}$(L_p)$ and  $(z-L_p)^{-1}$ extends to a bounded operator on $ L^q(X)$
  then $z \notin ${\rm Spec}$(L_q)$. To simply notation we use the convention 
$L=L_2$ consistent with the rest of this note. 

We start with proving the following lemma. 

 \begin{lemma}\label{le4.1}
Suppose that $L$ satisfies the finite propagation  speed property for
the corresponding wave equation and that 
 for some $\sigma >0 $, $\kappa\ge 0$ the resolvent type estimate \eqref{a1} holds. 
Then 
 \begin{eqnarray}\label{e4.1}
\|(z^2+L_p)^{-1}\|_{p\to p}
\leq C\left(\frac{|z|}{|{\rm Re}\, z|}\right)^{2\sigma+2\kappa+3/2} \frac{1}{|z|^2}\left(1+\frac{1}{|z|^2}\right)^{\kappa}
\end{eqnarray}
  for  all ${\rm Re}\, z > 0$ and  $1\leq p<\infty.$
  
  In addition if $z \notin ${\rm Spec}$(L_p)$ and $z \notin ${\rm Spec}$(L_q)$ then the resolvents 
  $(z-L_p)^{-1}$ and $(z-L_q)^{-1}$ are consistent that is they coincide on $L^p(X)\cap L^q(X)$. 
 \end{lemma}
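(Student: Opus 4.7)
The plan is to derive \eqref{e4.1} directly from Theorem~\ref{thm1.1}(i) by recognizing \eqref{e3.55} as a disguised bound on the complex-time semigroup $e^{-sL}$, and then to recover $(z^2+L_p)^{-1}$ as the Laplace transform of that semigroup along a contour rotated by $-\arg z$.

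First I would rephrase \eqref{e3.55}. Given $s\in\mathbb{C}$ with ${\rm Re}\,s>0$, write $s=t(1-i\xi)$ with $t={\rm Re}\,s$ and $\xi=-{\rm Im}\,s/{\rm Re}\,s$, so that $e^{-sL}=e^{i\xi tL}e^{-tL}$ and $1+\xi^2=|s|^2/({\rm Re}\,s)^2$. Then \eqref{e3.55} reads
$$
\|e^{-sL}\|_{1\to 1}\leq C\left(\frac{|s|}{{\rm Re}\,s}\right)^{2\sigma+2\kappa+1/2}(1+{\rm Re}\,s)^\kappa.
$$
Self-adjointness of $L$ yields the same bound on $L^\infty$ by duality, while $\|e^{-sL}\|_{2\to 2}\leq 1$ is immediate from the spectral theorem. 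Interpolation then extends this estimate to every $L^p$ with $1\leq p\leq\infty$.

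Next I would turn to the resolvent. Write $z=|z|e^{i\alpha}$ with $\alpha\in(-\pi/2,\pi/2)$ and parametrize a rotated contour by $s=re^{-i\alpha}$, $r>0$. A direct computation gives $e^{-i\alpha}z^2=|z|z$, so ${\rm Re}(e^{-i\alpha}z^2)=|z|\,{\rm Re}\,z>0$, while ${\rm Re}\,s=r\,{\rm Re}\,z/|z|$ and $|s|/{\rm Re}\,s=|z|/{\rm Re}\,z$. When ${\rm Re}\,z^2>0$ the standard Laplace representation $(z^2+L_p)^{-1}=\int_0^\infty e^{-\tau z^2}e^{-\tau L_p}\,d\tau$ converges absolutely, and Cauchy's theorem deforms the contour to
$$
(z^2+L_p)^{-1}=e^{-i\alpha}\int_0^\infty e^{-r|z|z}\,e^{-re^{-i\alpha}L_p}\,dr.
$$
For $z$ with ${\rm Re}\,z>0$ but ${\rm Re}\,z^2\leq 0$ I would take the rotated integral as the definition: it converges absolutely in $L^p$-operator norm by the first step, is analytic in $z$, and coincides with the $L^2$-resolvent on $L^p\cap L^2$, hence furnishes $(z^2+L_p)^{-1}$ throughout $\{{\rm Re}\,z>0\}$. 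Taking norms, applying the $L^p$ semigroup bound, and substituting $u=r|z|\,{\rm Re}\,z$ reduces the matter to the scalar integral $\int_0^\infty e^{-u}(1+u/|z|^2)^\kappa\,du\leq C_\kappa(1+|z|^{-2})^\kappa$; collecting prefactors then produces exactly \eqref{e4.1}.

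For the consistency statement I would observe that for $f\in L^p\cap L^q$ and ${\rm Re}\,w$ sufficiently large the Laplace formula $(w+L_p)^{-1}f=\int_0^\infty e^{-wt}e^{-tL_p}f\,dt$, combined with the standing consistency of the semigroups, forces $(w+L_p)^{-1}f=(w+L_q)^{-1}f$. The map $z\mapsto(z-L_p)^{-1}f-(z-L_q)^{-1}f$ is analytic on $\rho(L_p)\cap\rho(L_q)$ with values in $L^p+L^q$ and vanishes on an open set, so a standard analytic-continuation argument extends the identity to the whole common resolvent set. I expect the main obstacle to be the justification of the contour-rotation identity in the regime ${\rm Re}\,z^2\leq 0$: there the positive real axis is no longer a valid contour, and one must argue via analytic continuation (in $z$ and in the rotation angle) that the rotated integral really inverts $z^2+L_p$.
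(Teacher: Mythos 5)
Your proof is correct and takes essentially the same route as the paper: both express $(z^2+L_p)^{-1}$ as a Laplace integral of the semigroup along a ray rotated by $-\arg z$, apply the bound from Theorem~\ref{thm1.1}(i) inside the integral, and evaluate the resulting scalar integral. The only differences are cosmetic: you first repackage \eqref{e3.55} as an explicit bound on the complex-time semigroup $e^{-sL}$ and spell out the duality/interpolation step and the analytic-continuation argument for consistency, whereas the paper simply cites the standard contour-rotated resolvent formula and reads off consistency from it.
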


 \begin{proof}
Write $z=re^{i\theta}$ for $r>0$ and $-\pi/2<\theta<\pi/2$. By a standard formula, see e.g. \cite{Da} or 
\cite[Proposition 7.9]{O}
$$
(L_p+z^2)^{-1}=(L_p+r^2e^{2i\theta})^{-1}=e^{-i\theta}\int_0^\infty
\exp[-L_pse^{-i\theta}-sr^2e^{i\theta}]ds.
$$
Note that the above relation  shows consistency of the resolvents.
Next we set $t=r\cos \theta$ and $\xi=\tan \theta$. Then by the above formula and \eqref{e3.55},
\begin{eqnarray*}
\|(L_p+z^2)^{-1}\|_{p\to p}
&\leq & \int_0^\infty \|e^{is\sin\theta L_p}e^{-s\cos\theta L_p}\|_{p\to p}
e^{-sr^2\cos\theta}ds\\
&= & \int_0^\infty \|e^{i\xi t L}e^{-t L}\|_{p\to p}
e^{-sr^2\cos\theta}ds\\
&\leq &C \int_0^\infty (1+\xi^2)^{\sigma+\kappa+1/4}(1+t)^{\kappa}
e^{-sr^2\cos\theta}ds\\
&\leq & C(1+\tan^2\theta)^{\sigma+\kappa+1/4} \frac{1}{r^2\cos \theta}
\left(1+\frac{1}{r^{2\kappa}}\right),
\end{eqnarray*}
which implies \eqref{e4.1}. 
 \end{proof}

Our spectral independence result can be stated now in the following way. 
 \begin{theorem}\label{th4.2}
Suppose that $L$ satisfies the finite propagation  speed property for
the corresponding wave equation and that  for some $\sigma>0$, $\kappa\ge 0$ resolvent type estimate \eqref{a1} holds 
for $L$. 
Then {\rm Spec}$(L_p)$, the spectrum of the operator $L_p$, is independent of the space $L^p(X)$
for all $1 \le p < \infty$.
 \end{theorem}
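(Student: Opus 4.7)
The plan is to establish both inclusions $\mathrm{Spec}(L_p) \subset \mathrm{Spec}(L_2)$ and $\mathrm{Spec}(L_2) \subset \mathrm{Spec}(L_p)$ for each $p \in [1,\infty)$. Lemma~\ref{le4.1} supplies the first easy observation: writing $\mu = -z^2$ as $z$ ranges over the right half-plane parametrizes $\mathbb{C}\setminus[0,\infty)$, so the bound \eqref{e4.1} together with the consistency clause of that lemma shows $\mathbb{C}\setminus[0,\infty) \subset \rho(L_p)$ for every $p$, with the resolvents there forming a consistent family on intersections $L^p\cap L^q$. In particular $\mathrm{Spec}(L_p) \subset [0,\infty)$ for every $p$.

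For the main inclusion $\mathrm{Spec}(L_p) \subset \mathrm{Spec}(L_2)$, I fix $\lambda_0 \in \rho(L_2) \cap [0,\infty)$ and use closedness of $\mathrm{Spec}(L_2)$ to pick $\delta > 0$ with $(\lambda_0 - 2\delta,\lambda_0 + 2\delta) \cap \mathrm{Spec}(L_2) = \emptyset$. I construct $F \in C^\infty(\mathbb{R})$ that coincides with $(\lambda_0 - \lambda)^{-1}$ outside $(\lambda_0 - \delta, \lambda_0 + \delta)$ and is smoothly interpolated across that interval. Since $F^{(m)}(\lambda) = O(|\lambda|^{-m-1})$ at infinity, $F$ satisfies the hypotheses \eqref{e1.1} and \eqref{e1.2} of Corollary~\ref{coro1.1} (with any $\epsilon \in (0,1]$), so $T := F(L)$ is bounded on every $L^p$. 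On $L^2$ we have $T = (\lambda_0 - L_2)^{-1}$ by the $L^2$-functional calculus, because $F$ agrees with $(\lambda_0 - \cdot)^{-1}$ on $\mathrm{Spec}(L_2)$.

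The crux, and the step I expect to require the most care, is to identify the $L^p$-extension of $T$ with the resolvent $(\lambda_0 - L_p)^{-1}$. For this I choose an auxiliary $\mu \in \mathbb{C}\setminus[0,\infty)$, where $(\mu - L)^{-1}$ is bounded on every $L^p$ and consistent across $p$ by the first step. The resolvent identity $T - (\mu - L_2)^{-1} = (\mu - \lambda_0)(\mu - L_2)^{-1}\, T$, which holds on $L^2$ by the functional calculus, extends by density of $L^p\cap L^2$ in $L^p$ and boundedness of all the operators involved to the corresponding identity on $L^p$. Applying $(\mu - L_p)$ on the left and simplifying then yields $(\lambda_0 - L_p)T = I$ on $L^p$, and a symmetric calculation on $D(L_p)$ gives $T(\lambda_0 - L_p) = I$. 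Hence $\lambda_0 \in \rho(L_p)$ with $T = (\lambda_0 - L_p)^{-1}$, which is $\mathrm{Spec}(L_p) \subset \mathrm{Spec}(L_2)$.

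For the reverse inclusion I exploit the self-adjointness of $L$ on $L^2$: the adjoint relation $(L_p)^* = L_{p'}$ combined with $\mathrm{Spec}(L_p)\subset[0,\infty)\subset\mathbb{R}$ forces $\mathrm{Spec}(L_p) = \mathrm{Spec}(L_{p'})$. Given $\lambda_0 \in \rho(L_p)$, the resolvent at $\lambda_0$ is then bounded on both $L^p$ and $L^{p'}$; consistency of these two resolvents on $L^p\cap L^{p'}$ follows from Lemma~\ref{le4.1} together with analytic continuation across the common resolvent set, and Riesz--Thorin interpolation produces a bounded extension to $L^2$. The same resolvent-identity argument as above then identifies this extension with $(\lambda_0 - L_2)^{-1}$. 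The one non-routine point throughout is the identification step: a bounded operator on $L^p$ that algebraically inverts $\lambda_0 - L$ on a dense subspace is not automatically the resolvent, and the combination of Lemma~\ref{le4.1}'s consistency with the resolvent identity is what makes this step go through.
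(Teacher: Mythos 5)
Your proposal follows essentially the same route as the paper: use Lemma~\ref{le4.1} to push $\mathrm{Spec}(L_p)$ into $[0,\infty)$, then construct a smooth function agreeing with $(\rho-\lambda)^{-1}$ away from a gap around a point $\rho\in\rho(L_2)$, apply Corollary~\ref{coro1.1} to get $L^p$-boundedness, and use duality plus interpolation for the reverse inclusion. The only substantive difference is that you spell out the identification of the $L^p$-bounded extension with the actual resolvent $(\rho-L_p)^{-1}$ via the resolvent identity, whereas the paper subsumes this into the consistency remarks at the start of Section~4 and the final clause of Lemma~\ref{le4.1}; this is a reasonable extra caution but not a different argument.
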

\begin{proof}
By \eqref{e4.1}, for $z\notin  [0,\infty)$  the resolvent $(z-L)^{-1}$
is bounded as an operator acting on any $L^p(X)$ space for $1\leq p<\infty$ and so   Spec$(L_p) \subset \R_+$.

Now assume that  $\rho  \ge 0$ is not contained in  {\rm Spec}$(L_2)$. Then there exists $\epsilon>0$ such that
$(\rho-\epsilon, \rho+\epsilon)\cap$ {\rm Spec}$(L_2)=\emptyset$.
Now consider a smooth function $\psi \in C^{\infty}_c(\rho-\epsilon, \rho+\epsilon)$
such that $\psi(x)=1$ for $ x \in (\rho-\epsilon/2, \rho+\epsilon/2)$.
Then  $\psi(L)=0$ so
 $$
 (I-\psi(L))(\rho-L)^{-1}=(\rho-L)^{-1}.
 $$
Notice that function $g(\lambda) = (1-\psi(\lambda))(\rho-\lambda)^{-1}$ satisfies conditions
\eqref{e1.1} and \eqref{e1.2} of Corollary~\ref{coro1.1}. Hence the operator
$(I-\psi(L))(\rho-L)^{-1}= (\rho-L)^{-1}$ extends to a bounded operator acting on $L^p(X)$.
 This shows that {\rm Spec}$(L_p) \subseteq$ {\rm Spec}$(L_2)$.

 To show the opposite inclusion, assume  that  $\rho  \ge 0$ is not contained in {\rm Spec}$(L_p)$.
By duality $(\rho-L_{p'})^{-1}$ is bounded on  $L^{p'}$. Hence by consistency and interpolation   $(\rho-L)^{-1}$
is bounded on $L^2$. The proof of Theorem~\ref{th4.2} is complete.
\end{proof}

 \section{Applications  }\label{sec5}
\setcounter{equation}{0}

In this section we describe a number of applications of our main results. Note that in Section \ref{sub5.3} below estimate
\eqref{a1} cannot hold for $\kappa=0$. This is an important motivation for the type of assumptions which we consider 
in our main result.

\subsection{Manifolds with ends}\label{sub5.1}

Firstly  we want to show one can  apply Theorem~\ref{thm1.1} to the setting of manifolds with ends 
studied in details by  Grigor'yan and Saloff-Coste in \cite{GS}. The precise description of the notion of 
manifolds with ends in full generality is complex and  technical. These technical details are not relevant for the spectral multiplier technique which we describe in this note. 
Therefore we consider here only simple model case from \cite{GS}. We leave it to the interested readers to check that the approach described below can be applied to the whole class of manifolds with ends discussed there.

Let $M$ be a complete non-compact Riemannian manifold. Let $K\subset M$ be a compact set
with non-empty interior and smooth boundary such that $M\backslash K$ has $k$ connected
components $E_1,\ldots,E_k$ and each $E_i$ is non-compact. We say in such a case that $M$
has $k$ ends with respect to $K$. Here we are going to consider only the case $k=2$ that is two different ends. 
We are also  going to assume that
each $E_i$ for $i=1,2$ is isometric to the exterior of a compact set in another manifold $M_i$ described below. In such
case we write $M=M_1\sharp M_2$. Next,  fix an integer $m\ge 3$, which will
be the topological dimension of $M$, and for any integer $2 < n \le m$, define the manifold $\mathfrak{R}^n$ by
$$
 \mathfrak{R}^n=\mathbb{R}^n\times \mathbb{S}^{m-n}.
$$
The manifold $\mathfrak{R}^n$ has topological dimension $m$ but its ``dimension at infinity" is $n$
in the sense that $V(x,r)\approx r^n$  for $r\geq1$, see \cite[(1.3)]{GS}. Thus, for
different values of $n$, the manifold $\mathfrak{R}^n$ have different dimension at infinity but
the same topological dimension $m$. This enables us to consider finite connected sums of the $\mathfrak{R}^n$
and $\R^m$.
Namely consider  integers $2< n \le  m$ and define $M$ as 
$$
    M= \mathfrak{R}^{n}\sharp\,  {\R}^{m}.
$$
Next for any $x\in M$ we put 
$$
   |x|:=\sup_{z\in K}d(x,z)
$$
and we recall that $
   V(x,r)=\mu(B(x,r))$.
From the construction of the manifold $M$, we can see that
\begin{itemize}
\item[(a)] $V(x,r)\thickapprox r^m$ for all $x\in M$, when $r\leq 1$;
\item[(b)] $V(x,r)\thickapprox r^n$ for $B(x,r)\subset \mathfrak{R}^n$, when $r> 1$; and
\item[(c)] $V(x,r)\thickapprox r^m$ for $x\in \mathfrak{R}^n\backslash K$, $r>2|x|$, or $x\in \mathbb{R}^m$, $r>1$.
\end{itemize}
It is not difficult to check that if $n< m$ then  $M$ does not satisfy the doubling condition.  Indeed, consider a
family  of balls $B(x_r,r)\subset \mathfrak{R}^n$ such that $r = |x_r| \rightarrow \infty $.
Then $V(x_r,r)\thickapprox r^n$ but $V(x_r,2r)\thickapprox r^m$ and the doubling condition fails.

Let $\Delta$   be  the Laplace-Belltrami operator acting on $M$ and let  $p_t( x, y)$ be 
 the heat kernel corresponding to the heat propagator  $e^{-t\Delta}$.
In \cite{GS} Grigor'yan and Saloff-Coste establish both the global upper bound and lower
bound for the heat kernel of the semigroup $e^{-t\Delta}$ acting on this model class.
The following lemma is an obvious consequence of their results.

\begin{lemma}\label{lem2.1}
Let $M=\mathfrak{R}^n \sharp\, {\R}^m$ with $2<n \leq m$ and  $\Delta$ be
 the Laplace-Belltrami operator acting on $M$. Then the  kernel $p_t(x,y)$ of the semigroup  $e^{-t\Delta}$  satisfies the following on-diagonal estimate
$$\sup_{x\in M} p_t(x,x) \le C\Big( t^{-n/2}+t^{-m/2}\Big).$$
\end{lemma}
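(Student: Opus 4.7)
The plan is to split the analysis into small and large time regimes and, in each regime, extract the on-diagonal bound from the estimates established by Grigor'yan and Saloff-Coste in \cite{GS}. Since only the diagonal is needed, I would avoid invoking the full two-sided kernel bounds and work with the simpler on-diagonal upper estimates.

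For the small-time regime $0<t\le 1$, the manifold $M=\mathfrak{R}^{n}\sharp\R^{m}$ is a complete Riemannian manifold of topological dimension $m$ with bounded geometry (it is glued from pieces that are isometric to $\mathbb{R}^n\times \mathbb{S}^{m-n}$ and $\mathbb{R}^m$ outside the compact gluing region $K$). The volume lower bound (a) from the excerpt, $V(x,r)\gtrsim r^m$ for $r\le 1$, combined with the standard Sobolev / Nash inequality on complete manifolds (or equivalently Varopoulos' on-diagonal estimate), gives $p_t(x,x)\le C\,t^{-m/2}$ uniformly in $x\in M$ for $0<t\le 1$.

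For the large-time regime $t\ge 1$, I would invoke directly the upper on-diagonal bound from \cite{GS}: on a connected sum $M_1\sharp M_2$ of two manifolds whose ``dimensions at infinity'' are $n$ and $m$ respectively, one has, for $t\ge 1$,
\[
p_t(x,x)\;\le\; \frac{C}{V(x,\sqrt{t}\,)}\;+\;(\text{cross-end corrections}),
\]
where the corrections are also controlled by $t^{-n/2}+t^{-m/2}$. Using the volume asymptotics (b) and (c) listed in the excerpt: if $x$ lies in the $\mathfrak{R}^n$ end with $|x|\le\sqrt t$ then $V(x,\sqrt t)\gtrsim t^{n/2}$, giving $p_t(x,x)\lesssim t^{-n/2}$; if $x$ lies in the $\R^m$ end then $V(x,\sqrt t)\gtrsim t^{m/2}$, giving $p_t(x,x)\lesssim t^{-m/2}$; the transitional region where $\sqrt t \sim |x|$ is handled by the same volume lower bounds, and in each case the result is dominated by $t^{-n/2}+t^{-m/2}$.

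Combining the two regimes yields
\[
\sup_{x\in M}p_t(x,x)\;\le\; C\bigl(t^{-n/2}+t^{-m/2}\bigr)
\]
for all $t>0$, which is the claim. The main technical point is the large-time step, where one must be sure to cover uniformly in $x$ both of the ends and the transition zone between them; this is precisely what the machinery of \cite{GS} delivers, and I would cite their on-diagonal upper estimate rather than reproduce it. The small-time step is standard.
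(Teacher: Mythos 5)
Your overall strategy (split into $t\le1$ and $t\ge1$, use local bounded geometry for small time, cite Grigor'yan--Saloff-Coste for large time) agrees with the paper's proof, which simply invokes Corollary~4.16 and Section~4.5 of \cite{GS}, and the final conclusion is correct. The small-time step is fine. However, there is a genuine flaw in the form of the large-time bound you invoke. You write that $p_t(x,x)\le C/V(x,\sqrt{t})+(\text{cross-end corrections})$ and then run the case analysis off the $1/V(x,\sqrt{t})$ term. On a manifold with ends of different dimensions at infinity, the estimate $p_t(x,x)\lesssim 1/V(x,\sqrt{t})$ is \emph{false}: for $x$ near the compact gluing set $K$ and $t\ge 1$ large, property (c) gives $V(x,\sqrt{t})\approx t^{m/2}$ (the ball $B(x,\sqrt{t})$ covers $K$ and enters both ends), so $1/V(x,\sqrt{t})\approx t^{-m/2}$, whereas the two-sided estimates of \cite{GS} show $p_t(x,x)\approx t^{-n/2}\gg t^{-m/2}$ there. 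This is exactly the mechanism that makes $M$ non-doubling and makes the heat kernel analysis delicate: the ``correction'' terms (which encode the hitting probability of $K$ and the escape into the smaller end) are not lower-order, they dominate near $K$. Concretely, your inference ``if $|x|\le\sqrt{t}$ in the $\mathfrak{R}^n$ end then $V(x,\sqrt{t})\gtrsim t^{n/2}$, giving $p_t(x,x)\lesssim t^{-n/2}$'' happens to reach the correct conclusion, but the step $p_t(x,x)\lesssim 1/V(x,\sqrt{t})$ that it implicitly uses would actually give the wrong, too-strong bound $t^{-m/2}$ there. The repair is simple and brings you back to the paper's one-line proof: do not decompose the GS bound in this way, just cite their upper on-diagonal estimate directly and observe that over all of $M$ and all $t\ge1$ it is bounded by $C\,t^{-n/2}$, which (for $t\ge1$) is the dominant term in $t^{-n/2}+t^{-m/2}$.
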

\begin{proof}
The proof is a straightforward consequence of \cite[Corollary 4.16 and Section 4.5]{GS}. In fact the estimate in 
Lemma~\ref{lem2.1} are essentially more elementary than the quoted results and can be verified directly.  
\end{proof}

 Lemma~\ref{lem2.1} implies the following result. 

\begin{lemma}\label{le2.2}
 Let $M=\mathfrak{R}^n \sharp\, \R^m$ with $2<n \le m$ and  $\Delta$ be
   the Laplace-Belltrami operator acting on $M$.
Then for any $\sigma>m/4$ and $\kappa=(m-n)/4$ estimate \eqref{a1} holds for $\Delta$, that is 
there exists constant $C$  
such that
\begin{eqnarray*}
\|V^{1/2}_t(I+t^2\Delta)^{-\sigma}\|_{2 \to \infty} \le C (1+t^2)^{(m-n)/4}
\end{eqnarray*}
for all $t>0$.
\end{lemma}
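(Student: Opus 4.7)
My plan is to reduce the $L^2\to L^\infty$ estimate to an on-diagonal kernel bound, express the resolvent as a Laplace-type average of the heat semigroup, and then combine Lemma~\ref{lem2.1} with the three volume regimes (a)--(c). Note that because $M$ fails the doubling condition when $n<m$, I cannot invoke Lemma~\ref{prop2} to reduce to the heat kernel estimate~\eqref{a22}; instead I will work directly with the resolvent kernel.

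The first step is a standard duality observation: since $T=(I+t^2\Delta)^{-\sigma}$ is self-adjoint on $L^2(M)$, the $L^2\to L^\infty$ norm of the integral operator $V_t^{1/2}T$ satisfies
\[
\|V_t^{1/2}T\|_{2\to\infty}^{\,2}=\sup_{x\in M}V(x,t)\,K_{T^2}(x,x)
=\sup_{x\in M}V(x,t)\,K_{(I+t^2\Delta)^{-2\sigma}}(x,x).
\]
Thus the problem becomes a pointwise estimate on the diagonal of the $-2\sigma$ resolvent kernel, times the volume of balls of radius $t$.

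Next I will invoke the subordination identity
\[
(I+t^2\Delta)^{-2\sigma}=\frac{1}{\Gamma(2\sigma)}\int_0^\infty s^{2\sigma-1}e^{-s}\,e^{-st^2\Delta}\,ds,
\]
and apply the on-diagonal heat kernel estimate from Lemma~\ref{lem2.1}, namely $p_\tau(x,x)\le C(\tau^{-n/2}+\tau^{-m/2})$. Inserting this with $\tau=st^2$ yields
\[
K_{(I+t^2\Delta)^{-2\sigma}}(x,x)\le C\!\int_0^\infty s^{2\sigma-1}e^{-s}\!\left[(st^2)^{-n/2}+(st^2)^{-m/2}\right]ds
\le C\bigl(t^{-n}+t^{-m}\bigr),
\]
where the integrals in $s$ converge because the hypothesis $\sigma>m/4$ ensures $2\sigma-n/2$ and $2\sigma-m/2$ are positive.

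It remains to bound $\sup_x V(x,t)(t^{-n}+t^{-m})$ by $C(1+t^2)^{(m-n)/2}$, which will give the claim after taking a square root. Here I use only the uniform upper bound $V(x,t)\le C\,\max(t^n,t^m)$ that follows from the three regimes (a)--(c): for $t\le 1$ we get $V(x,t)\le Ct^m$, so $V(x,t)(t^{-n}+t^{-m})\le C(t^{m-n}+1)\le C$; for $t>1$ the worst case is $V(x,t)\le Ct^m$ (attained deep in the $\R^m$-end or inside $K$-neighbourhoods), so $V(x,t)(t^{-n}+t^{-m})\le C\,t^{m-n}\le C(1+t^2)^{(m-n)/2}$. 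Combining the two regimes delivers the desired estimate.

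The only subtle point is the last step: one must notice that the volume upper bound $V(x,t)\lesssim t^m$ is uniform in $x$ (so non-doubling is harmless for upper estimates), while the mismatch with the heat kernel decay $t^{-n/2}$ at infinity on the $\mathfrak{R}^n$-end is precisely what forces the factor $(1+t^2)^{(m-n)/4}$ and hence the nonzero $\kappa=(m-n)/4$. No individual inequality is hard, but keeping track of the exponents through the two regimes is the place where one has to be careful.
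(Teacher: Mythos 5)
Your proposal is correct and follows essentially the same path as the paper's own proof: duality reduces the $L^2\to L^\infty$ norm to the on-diagonal kernel of $(I+t^2\Delta)^{-2\sigma}$, the subordination formula expresses the resolvent as a weighted average of $e^{-st^2\Delta}$, Lemma~\ref{lem2.1} gives the heat-kernel diagonal bound $C(\tau^{-n/2}+\tau^{-m/2})$, and the uniform volume bound $V(x,t)\le Ct^m$ then closes the argument (the paper states this bound directly, while you spell out the $t\le 1$ versus $t>1$ split, but the inequality used is the same). Your remark that $\sigma>m/4$ is exactly what makes the subordination integral converge at $s=0$ is correct and matches the implicit computation in the paper.
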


\begin{proof} For every  $\sigma>0$
\begin{eqnarray}\label{ee}
{(I+t^2\Delta)^{-2\sigma}}
= \frac{1}{\Gamma(2\sigma)}\int_0^\infty e^{-s} \, s^{2\sigma-1}\exp(-st^2\Delta)\,ds.
\end{eqnarray}
Hence  if $\sigma>m/4,$ then by Lemma~\ref{lem2.1}
\begin{eqnarray*}
|K_{(I+t^2\Delta)^{-2\sigma}}(x,x)| &\le& C
\int_0^\infty e^{-s} \, s^{2\sigma-1}\left((st^2)^{-n/2}+(st^2)^{-m/2}\right) ds\\
&\le& C\big(t^{-n}+t^{-m}\big)
\end{eqnarray*}
for all $x\in M$ and $t>0$.

Recall that $V(x,t)\leq Ct^{m}$ for all $x\in M$ and $t>0$ so by duality and the above inequality
\begin{eqnarray*}
\|V^{1/2}_t(I+t^2\Delta)^{-\sigma}\|^2_{2 \to \infty}
&=&\sup_{x\in M} V(x,t) \int_M |K_{(I+t^2\Delta)^{-\sigma}}(x,y)|^2d\mu(y)\\
&\leq&C\sup_{x\in M} V(x,t)  |K_{(I+t^2\Delta)^{-2\sigma}}(y,y)|\\
&\leq&C (1+t^2)^{(m-n)/2}.
\end{eqnarray*}
This proves Lemma~\ref{le2.2}.
\end{proof}

Theorems~\ref{thm1.1}, \ref{th4.2} and Lemma~\ref{le2.2} yield the following theorem.

\begin{theorem}
Let $M=\mathfrak{R}^n \sharp\, {\R}^m$ with $2<n  \le  m$.
Suppose that $\Delta$ is  the  Laplace-Belltrami operator acting on $M$.
Then estimate \eqref{e3.55} holds
for any exponent $\sigma>m/4$ and $\kappa=(m-n)/4$ and   {\rm Spec}$(\Delta_p)$ - the spectrum of the operator $\Delta_p$ is independent of $p$
for all $1 \le p < \infty$. 

Moreover, if $F$ is a bounded Borel function such that $\supp F\subset [-1, 1] $
and $F\in  H^s(\mathbb R)$ for some $s>m/2 +(m-n)/2+1$,
then the operator $F(\Delta)$ is bounded on $L^p(M)$ and
\eqref{e3.5555} holds with $\kappa=(m-n)/4$ that is 
 \begin{eqnarray*}
\|F(t\Delta)\|_{p\to p} \leq C(1+t)^{(m-n)/4}\|F\|_{H^{s}}
\end{eqnarray*}
for all $t>0$.
\end{theorem}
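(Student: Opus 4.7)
The plan is to deduce the theorem by assembling the pieces already established in the paper: Lemma~\ref{le2.2}, Theorem~\ref{thm1.1}, and Theorem~\ref{th4.2}. No new technical work is needed; the only subtlety is the matching of parameters in the multiplier statement.

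First I would verify that $\Delta$ satisfies the finite propagation speed property \eqref{fs}. Since $M=\mathfrak{R}^n\sharp\,\R^m$ is a complete Riemannian manifold (the connected sum construction preserves completeness), this is the classical Cheeger--Gromov--Taylor result cited in \cite{CGT}, and it is equivalent to the Davies--Gaffney estimate as noted in the paper. With \eqref{fs} in hand, Lemma~\ref{le2.2} directly supplies the resolvent estimate \eqref{a1} with $\kappa=(m-n)/4$ for every $\sigma>m/4$. Feeding this into Theorem~\ref{thm1.1}(i) yields the semigroup bound \eqref{e3.55} with the stated exponents, and feeding it into Theorem~\ref{th4.2} immediately yields the $L^p$-independence of the spectrum of $\Delta_p$ for all $1\le p<\infty$.

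For the multiplier statement I would invoke Theorem~\ref{thm1.1}(ii), whose hypothesis on the regularity exponent is $s>2\sigma+2\kappa+1$. Since $\kappa=(m-n)/4$ is fixed while $\sigma$ may be chosen arbitrarily close to $m/4$ from above, any $s>m/2+(m-n)/2+1$ admits some $\sigma\in\bigl(m/4,\,(s-(m-n)/2-1)/2\bigr)$ for which $2\sigma+2\kappa+1<s$; this interval is nonempty precisely because of the hypothesis on $s$. Theorem~\ref{thm1.1}(ii) then delivers both $L^p$-boundedness of $F(\Delta)$ for all $1\le p\le\infty$ and the norm bound \eqref{e3.5555} with $\kappa=(m-n)/4$. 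The only substantive difficulty in the whole argument is hidden in Lemma~\ref{le2.2}, which in turn relies on the on-diagonal heat kernel bound of Grigor'yan--Saloff-Coste from \cite{GS}; once that input is taken as given, the present theorem follows routinely and no additional obstacle arises.
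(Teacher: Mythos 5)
Your proposal is correct and follows essentially the same route as the paper, which simply states that the theorem ``follows from Theorems~\ref{thm1.1}, \ref{th4.2} and Lemma~\ref{le2.2}.'' You spell out two points the paper leaves implicit --- that $\Delta$ satisfies \eqref{fs} because $M$ is a complete Riemannian manifold, and the arithmetic showing that $s>m/2+(m-n)/2+1$ allows a choice of $\sigma\in(m/4,\,(s-(m-n)/2-1)/2)$ with $2\sigma+2\kappa+1<s$ --- and both of these are exactly the right checks to make.
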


\begin{remark}\label{rem5}
{\rm Using the estimates from \cite{GS} one can show that $\|\exp(-t\Delta)\|_{p\to p} \le C$ uniformly in $t$ and~$p$. 
Therefore we conjecture that one can strengthen all the above results concerning the manifolds with ends by taking $\kappa=0$
instead of $(m-n)/4$.   }
\end{remark}

\medskip

\subsection{Semigroups without uniform $L^p$ bounds - Schr\"odinger operators with strongly subcritical potentials }\label{sub5.3}
In this subsection, we consider Schr\"odinger operators $L=-\Delta+V$ on $\mathbb{R}^n$, $n\geq 3$ and 
we assume that $L\geq 0$. 
Let $V=V_+-V_-$ be  the decomposition of $V$ into its positive and negative parts.  We say 
$L$ (or $V$) is  strongly subcritical, if  there exists small enough 
$\varepsilon>0$ such that
$$
L-\varepsilon V_-\geq 0.
$$

Following Davies and Simon~\cite{DS}, we say that  $L$ has a  resonance $\eta$  if there exists a non-zero function $\eta$ such that $L\eta=0$ and then we say that it is slowly varying with 
 index $\alpha$ for some  $0<\alpha<(n-2)/2$ if 
there exists  a constant $C>0$ such that
$$
\frac{\eta(x)}{\eta(y)}\leq C(1+|x-y|)^\alpha
$$
for all $x,y\in \R^n$.

 The following example of Schr\"odinger operators with a slowly varying resonance comes from  Murata~\cite{M1,M2}.
 Let $n\geq 3$ and put $L=-\Delta+V$ where $V(x)=0$ if $|x|\leq 1$ and $V(x)=-c/|x|^2$ if $|x|>1$ with constant $0<c\leq ((n-2)/2)^2$.
Then $L\geq 0$ and $V$ is strongly subcritical. Also $L$ has one positive radial resonance $\eta$ satisfying $\eta(x)\sim |x|^{-\alpha}$
as $|x|\to \infty$. Here
$$
0<\alpha=\frac{n-2}{2}-\sqrt{\left(\frac{n-2}{2}\right)^2-c}<\frac{n-2}{2}.
$$

The following theorem is a consequence of Theorems~\ref{thm1.1} and  \ref{th4.2} applied to the setting considered 
in~\cite{DS}.
\begin{theorem}\label{coro5.10}
Let $L=-\Delta+V$ be Schr\"odinger operator on $\mathbb{R}^n$, $n\geq 3$.
Assume that $L\geq 0$, that $V$ is strongly subcritical and that $L$ has a resonance $\eta\geq 0$
in $L_w^{n/\alpha}$ which is slowly varying with index $\alpha$ where $0<\alpha<(n-2)/2$.
Then there exists a constant $C=C(\sigma, \varepsilon)>0$ such that estimate \eqref{e3.55} holds
for any $\sigma>n/4$ and $\kappa=\alpha/2+\varepsilon$ and {\rm Spec}$(L_p)$ the spectrum of the operator $L_p$ is independent of the space $L^p(\R^n)$
for all $1 \le p < \infty$. 

Moreover, if $F$ is a bounded Borel function such that $\supp F\subset [-1, 1] $
and $F\in  H^s(\mathbb R)$ for some $s>n/2+1$,
then the operator $F(L)$ is bounded on $L^p(\R^n)$ and
\eqref{e3.5555} holds with $\kappa=\alpha/2+\varepsilon$ that is 
\begin{eqnarray*}
\|F(tL)\|_{p\to p} \leq C(1+t)^{\alpha/2+\varepsilon}\|F\|_{H^{s}}
\end{eqnarray*}
for all $t>0$.
\end{theorem}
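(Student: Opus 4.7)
The plan is to reduce Theorem~\ref{coro5.10} to the abstract Theorems~\ref{thm1.1} and~\ref{th4.2}. Thus one needs to verify two things for $L=-\Delta+V$: the finite propagation speed property \eqref{fs}, and the resolvent-type estimate \eqref{a1} with $\sigma>n/4$ and $\kappa=\alpha/2+\varepsilon$. Once these are in place, \eqref{e3.55} and \eqref{e3.5555} follow from Theorem~\ref{thm1.1}, and the $L^p$-spectral independence follows from Theorem~\ref{th4.2}.

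The first point is routine: since $V$ acts locally (as multiplication) and $-\Delta$ on $\R^n$ satisfies \eqref{fs} with unit speed, the standard form-domination/Davies--Gaffney argument (see \cite{Si, CouS}) yields \eqref{fs} for $L$. For the second point, since $(\R^n,d,dx)$ is doubling with exponent $n$ and $\sigma>n/4$, Lemma~\ref{prop2} reduces \eqref{a1} to the heat-kernel form $\|V_t^{1/2}e^{-t^2L}\|_{2\to\infty}\leq C(1+t^2)^\kappa$. Using $V(x,t)\approx t^n$ and the semigroup identity $\int p^L_{t^2}(x,y)^2\,dy=p^L_{2t^2}(x,x)$, this is in turn equivalent to
$$
\sup_{x\in\R^n}\, t^n\, p^L_{2t^2}(x,x)\leq C(1+t^2)^{2\kappa}.
$$
For small times $0<t\leq 1$, Feynman--Kac together with domination by the free heat semigroup (from $V_+\geq 0$) and the strong subcriticality bound $V_-\leq L/\varepsilon$ yield the classical Gaussian diagonal bound $p^L_t(x,x)\leq Ct^{-n/2}$. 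For large times $t\geq 1$, the slowly-varying resonance hypothesis on $\eta$ together with the Davies--Simon/Murata theory \cite{DS, M1, M2} provides the refined uniform on-diagonal estimate
$$
p^L_t(x,x)\leq C t^{-n/2}(1+t)^{\alpha+2\varepsilon}, \qquad t\geq 1,
$$
for any fixed $\varepsilon>0$. Combining the two regimes gives exactly $\kappa=\alpha/2+\varepsilon$, verifying \eqref{a1}.

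\textbf{Main obstacle.} The crux of the argument is the large-time on-diagonal bound. The resonance provides only the pointwise control $\eta(y)/\eta(x)\leq C(1+|x-y|)^\alpha$, and converting this into an $x$-uniform heat kernel upper bound requires the semigroup-iteration argument of Davies--Simon; the loss $(1+t)^{\alpha+2\varepsilon}$ emerges from absorbing subleading terms produced by the slow variation of $\eta$, and the arbitrary $\varepsilon>0$ cannot be eliminated by this method. Apart from this external analytic input, the deduction is a direct application of our abstract Theorems~\ref{thm1.1} and~\ref{th4.2}, with $\kappa=\alpha/2+\varepsilon$ giving in \eqref{e3.5555} precisely the growth rate $(1+t)^{\alpha/2+\varepsilon}$ stated in Theorem~\ref{coro5.10}.
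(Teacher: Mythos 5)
Your proposal is correct and follows essentially the same strategy as the paper: verify finite propagation speed via Davies--Gaffney, verify \eqref{a1} via Lemma~\ref{prop2} (the space $\R^n$ is doubling with exponent $n$), and then cite Theorems~\ref{thm1.1} and~\ref{th4.2}. The only cosmetic difference is in how the Davies--Simon input is invoked: the paper cites \cite[Theorem~14]{DS} directly, which gives the two-sided bound $c_{1,\varepsilon}t^{-n/4}(1+t)^{\alpha/2-\varepsilon}\le\|e^{-tL}\|_{2\to\infty}\le c_{2,\varepsilon}t^{-n/4}(1+t)^{\alpha/2+\varepsilon}$ for all $t>0$, and then applies Lemma~\ref{prop2} in one stroke; you recast this as a diagonal heat-kernel estimate $p^L_t(x,x)\lesssim t^{-n/2}(1+t)^{\alpha+2\varepsilon}$ and split the small- and large-time regimes. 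Since $\|e^{-tL}\|_{2\to\infty}^2=\sup_x p^L_{2t}(x,x)$, these are the same statement, and both routes produce $\kappa=\alpha/2+\varepsilon$. Your small-time argument is slightly under-justified (domination by the free heat semigroup is not immediate when $V_-\ne0$; one needs the strong subcriticality to close a Nash-type iteration), but this is harmless because the Davies--Simon theorem you invoke already covers small times uniformly, so the separate small-time treatment is redundant. The substance and the external analytic input are the same as the paper's.
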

\begin{proof}
By \cite[Theorem 3.3]{CouS} the semigroup generated by operator $L$
satisfies Davies-Gaffney estimates \eqref{dg} and so $L$ satisfies the finite speed propagation property. 
Next  by the result obtained by Davies and Simon, see \cite[Theorem 14]{DS}, for any $\varepsilon>0$
\begin{eqnarray}\label{e5.1}
c_{1,\varepsilon}t^{-n/4}(1+t)^{\alpha/2-\varepsilon}
\leq \|e^{-tL}\|_{2\to \infty}\leq c_{2,\varepsilon} t^{-n/4}(1+t)^{\alpha/2+\varepsilon}.
\end{eqnarray}
Now it follows from  the above estimate and Lemma~\ref{prop2} that for any $\sigma>n/4,\varepsilon>0$ 
and $\kappa=\alpha/2+\varepsilon$ resolvent estimate \eqref{a1} holds. That is 
there exists a constant $C=C(\sigma,\varepsilon)>0$ such that
$$
\|V_t^{1/2}(I+t^2L)^{-\sigma}\|_{2\to \infty}\leq C (1+t^2)^{\alpha/2+\varepsilon}.
$$
Thus the theorem follows from Theorems~\ref{thm1.1} and \ref{th4.2}.
\end{proof}

\begin{remark}\label{rm5.19}{\rm 
(A). In contrast to Remark \ref{rem5} we expect that one cannot remove the additional term $(1+t)^{\alpha/2+\varepsilon}$ especially for $p$ close to $1$ or $\infty$. Otherwise the estimate from Theorem \ref{coro5.10} with $\kappa=0$ would imply the uniform bounds for the semigroup which contradicts lower estimates for the semigroup in  \cite[Theorem 15]{DS}.
However, for some range of $p$ close to $2$, the uniform version with $\kappa=0$ holds, see \cite[Example 4.17]{CouS}. We do not discuss it here.

(B). The same approach can be used to study generalised Schr\"{o}dinger operators
$$\mathcal{L}=\nabla^*\nabla + \mathcal{R},$$
acting on a finite-dimensional Riemannian  bundle $E\rightarrow M$. Here $\nabla$ is a connection on $E\rightarrow M$ which is  compatible with the metric, and $\nabla^*\nabla$ is the so-called ``rough Laplacian''. Such operators 
include  Hodge Laplacians $\vec{\Delta}=dd^*+d^*d$ in the case where $ \mathcal{R}$ is Ricci curvature,  see e.g. \cite{CouZ, CDS}.  One can expect that in the subcritial case it leads to a similar discussion as described in this section. We do not study these operators here.}
\end{remark}

\medskip

\subsection{Non-uniform Gaussian upper bounds - Davies' example} \label{sub5.2}
The following example comes from~\cite{Da}. 
In this section we assume that
\begin{eqnarray}\label{e5.21}
V_r(x)\leq \left\{
\begin{array}{ll}
C r^{n_1} & {\rm if}\ \  0<r\leq 1\\[4pt]
C r^{n_2} & {\rm if}\ \  1\leq r<\infty,
\end{array}
\right.
\end{eqnarray}
where  $0<C<\infty$, $0<n_1\leq n_2<\infty$ and $V_r(x)=\mu(B(x,r))$. We want to stress here that we do not
assume that the doubling condition \eqref{dc} holds . 
Let $L$ be a non-negative self-adjoint operator acting on $L^2(X)$ such that an integral kernel $p_t(x,y)$
of the semigroup $e^{-tL}$  satisfies
\begin{eqnarray}\label{e5.22}
|p_t(x,y)|\leq C t^{-n_1/2}\exp\left\{-c{d(x,y)^2\over t}\right\}.
\end{eqnarray}

Considering a new metric $d'=2\sqrt{c}d$ we can assume that $c=1/4$ in the above estimate.
Note that \eqref{e5.21} still holds possibly with a new constant $C$. In what follows we always assume 
that $c=1/4$ in~\eqref{e5.22}.

First we claim that $L$ satisfies the finite propagation speed property for 
the corresponding wave equation  or equivalently the semigroup
$e^{-tL}$ satisfies Davies-Gaffney estimates. Recall that the semigroup satisfies Davies-Gaffney 
estimates if 
for all $f\in L^2(B_1,\mu)$ and
$g\in L^2(B_2,\mu)$
\begin{equation}\label{dg}\tag{DG}
\left|\langle  e^{-tL}f,g \rangle \right| \leq e^{-\frac{r^2}{4t}}\|f\|_2\|g\|_2,
\end{equation}
where $B_1$ and $B_2$ are open balls in $X$  and $d(B_1,B_2)>r>0$.

Now we are able to state the following lemma.

\begin{lemma}
Suppose that the space $(X, d, \mu)$ and the operator $L$ satisfies conditions \eqref{e5.21} and \eqref{e5.22}.
Then $L$ satisfies Davies-Gaffney estimates \eqref{dg} and as a consequence the finite propagation speed property
\eqref{fs} holds as well. 
\end{lemma}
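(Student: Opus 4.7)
The plan is to derive the Davies--Gaffney estimate \eqref{dg} directly from the pointwise Gaussian bound \eqref{e5.22} (normalised so that $c=1/4$), and then deduce the finite propagation speed property \eqref{fs} from the equivalence established in \cite{Si, CouS}.

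The starting point is the kernel representation
\[
\langle e^{-tL}f, g\rangle = \int_{B_2}\int_{B_1} p_t(x,y)\, f(y)\, g(x)\, d\mu(y)\, d\mu(x)
\]
for $f\in L^2(B_1,\mu)$, $g\in L^2(B_2,\mu)$ with $d(B_1,B_2)>r$. Since $d(x,y)\geq r$ throughout this range, I would insert \eqref{e5.22} and split the Gaussian: for a chosen $\theta\in(0,1)$,
\[
e^{-d(x,y)^2/(4t)} \leq e^{-\theta r^2/(4t)}\, e^{-(1-\theta)d(x,y)^2/(4t)}.
\]
The first factor yields the exponential decay in $r$; the second factor, paired with $|f|$ and $|g|$, is handled by Cauchy--Schwarz and Fubini, reducing the problem to bounding the Gaussian integrals $\int_X e^{-(1-\theta)d(x,y)^2/(2t)}\, d\mu(y)$.

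These Gaussian integrals are controlled by partitioning $X$ into annuli around $x$ of thickness proportional to $\sqrt{t/(1-\theta)}$ and bounding the volume of each annulus using \eqref{e5.21} (separately in the scale regimes $r\leq 1$ and $r\geq 1$). Combining with the $t^{-n_1/2}$ prefactor coming from \eqref{e5.22}, this yields an estimate of the form
\[
|\langle e^{-tL}f, g\rangle| \leq C(\theta, t)\, e^{-\theta r^2/(4t)}\, \|f\|_2\, \|g\|_2,
\]
where $C(\theta, t)$ depends polynomially on $t$ and on $(1-\theta)^{-1}$.

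The main obstacle is upgrading this polynomial-times-Gaussian estimate to the sharp form of \eqref{dg} displayed in the excerpt, with no polynomial prefactor and with the exact constant $1/4$ in the exponent. This is a standard self-improvement: for a non-negative self-adjoint $L$, any Gaussian-type off-diagonal $L^2$ decay of the semigroup is equivalent to the finite propagation speed property \eqref{fs}, by the results of \cite{Si, CouS}. Hence the rougher estimate above already delivers \eqref{fs}, and one then recovers \eqref{dg} in its displayed form (up to inessential constants that can be absorbed by the metric rescaling $d'=2\sqrt c\, d$ performed just before the lemma) by running the equivalence in the opposite direction.
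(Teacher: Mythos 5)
Your plan is close in spirit to the paper's proof: both bound the off--diagonal $L^2\to L^2$ norm of $e^{-tL}$ by a Schur--type estimate, use the volume bound \eqref{e5.21} through an annular decomposition, obtain a Gaussian estimate with a polynomial prefactor in $t$, and then invoke the Coulhon--Sikora self--improvement machinery to upgrade to the sharp Davies--Gaffney estimate and, equivalently, to \eqref{fs}. The paper carries this out via the bound
$\|e^{-tL}\|_{L^2(B_1)\to L^2(B_2)}\le \|e^{-tL}\|_{L^1(B_1)\to L^1(B_2)}+\|e^{-tL}\|_{L^\infty(B_1)\to L^\infty(B_2)}$
and then sums \eqref{e5.22} over annuli $U_k$ about the centers of the balls. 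Crucially, this keeps the \emph{sharp} constant $1/4$ in the exponent, producing $|\langle e^{-tL}f,g\rangle|\le C(1+t)^{(n_2-n_1)/2}e^{-r^2/(4t)}\|f\|_2\|g\|_2$, after which \cite[Lemma 3.2]{CouS} applies directly.

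The one genuine soft spot in your version is the $\theta$-split. Writing $e^{-d(x,y)^2/(4t)}\le e^{-\theta r^2/(4t)}e^{-(1-\theta)d(x,y)^2/(4t)}$ is a harmless way to separate the radial decay, but it degrades the exponent from $1/4$ to $\theta/4$. You then assert that ``any Gaussian-type off-diagonal $L^2$ decay is equivalent to \eqref{fs}'' and that remaining constants can be absorbed by the rescaling $d'=2\sqrt{c}\,d$. Both claims are imprecise here: that rescaling has already been used to normalise $c=1/4$ in \eqref{e5.22}, and cannot also absorb your $\theta$; and the Coulhon--Sikora equivalence is exact-constant sensitive --- from the prefactor $e^{-\theta r^2/(4t)}$ the self-improvement produces finite propagation with speed $1/\sqrt{\theta}>1$, not speed $1$. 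To close this you must observe that $\theta\in(0,1)$ is arbitrary, apply the self-improvement lemma for each fixed $\theta$ to get $\operatorname{supp}\cos(s\sqrt{L})\subseteq D_{s/\sqrt\theta}$ uniformly, and then let $\theta\to 1^-$ (the support condition passes to the limit since $D_{s/\sqrt\theta}\downarrow D_s$). This extra step is standard but needs to be said; a cleaner route, and the one the paper takes, is to avoid the $\theta$-split entirely by summing \eqref{e5.22} over annuli of radius $\sim 2^k r$ and pulling out $e^{-r^2/(4t)}$ exactly, so that \cite[Lemma 3.2]{CouS} applies at once with the correct propagation speed.

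Two smaller remarks. First, the Gaussian integral you reduce to should have $(4t)$ in the denominator (Schur's test uses $\sup_x\int K(x,y)\,d\mu(y)$, not $\int K^2$); your $(2t)$ looks like a slip. Second, be careful that after Cauchy--Schwarz over $B_1\times B_2$ the resulting quantity is indeed controlled uniformly in the ball radii $r_1,r_2$; the paper's computation suffers the same issue and implicitly relies on the flexibility of the self-improvement lemma to tolerate such dependence.
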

\begin{proof}
Let  $f\in L^2(B_1,\mu)$ and
$g\in L^2(B_2,\mu)$, where $B_1=B(x_1,r_1)$ and $B_2=B(x_2,r_2)$ are open balls in $X$  and $d(B_1,B_2)>r>0$. Set  $U_k(x_1,r_1)=B(x_1,2^{k+1}(r_1+r))\backslash B(x_1,2^{k}(r_1+r))$.
Decompose 
$$X\setminus B(x_1,r_1+r)= \bigcup_{k=0}^\infty U_k(x_1,r_1) \quad \mbox{and} \quad X\setminus B(x_2,r_1+r)=\bigcup_{k=0}^\infty U_k(x_2,r_2).$$
Then by interpolation
\begin{eqnarray}\label{e55.44}
\|e^{-tL}\|_{L^2(B_1)\to L^2(B_2) }&\leq&
\|e^{-tL}\|_{L^1(B_1)\to L^1(B_2) }+\|e^{-tL}\|_{L^\infty(B_1)\to L^\infty(B_2) }\nonumber\\ 
&\leq&\sup_{x\in B_1}\int_{B_2} |p_t(x,y)|d\mu(x)+
\sup_{x\in B_2}\int_{B_1} |p_t(x,y)|d\mu(y)\nonumber\\
&\leq&\sup_{x\in B_1}\sum_{k=0}^\infty \int_{U_k(x_1,r_1)} |p_t(x,y)|d\mu(x)+
\sup_{x\in B_2}\sum_{k=0}^\infty \int_{U_k(x_2,r_2)} |p_t(x,y)|d\mu(y)\\
&\leq& C(1+t)^{(n_2-n_1)/2}e^{-\frac{r^2}{4t}}\nonumber,
\end{eqnarray}
which implies
$$
\left|\langle  e^{-tL}f,g \rangle \right| \leq C(1+t)^{(n_2-n_1)/2}e^{-\frac{r^2}{4t}}\|f\|_2\|g\|_2.
$$
Then by \cite[Lemma 3.2]{CouS}, the self-improving property for Davies-Gaffney estimates, the above estimate implies the Davies-Gaffney estimates \eqref{dg}.
\end{proof}

Next we would like to make the following observation. 
\begin{lemma}\label{le2.222}
 Let $X$ and $L$ satisfy conditions~\eqref{e5.21} and \eqref{e5.22}. Then for any $\sigma>n_1/4$ and $\kappa=(n_2-n_1)/4$ resolvent estimate \eqref{a1} holds for $L$. 
\end{lemma}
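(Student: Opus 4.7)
The plan is to mimic almost verbatim the proof of Lemma~\ref{le2.2}, the only substantive difference being that the volume function is now only pointwise controlled by the two-scale estimate \eqref{e5.21} rather than by a global polynomial bound. First I would use the subordination formula
\begin{equation*}
(I+t^2L)^{-2\sigma} = \frac{1}{\Gamma(2\sigma)}\int_0^\infty e^{-s}\,s^{2\sigma-1}\,e^{-st^2L}\,ds,
\end{equation*}
together with the on-diagonal consequence of the heat kernel bound \eqref{e5.22}, namely $p_\tau(x,x)\le C\tau^{-n_1/2}$ for every $\tau>0$ and $x\in X$. Substituting $\tau=st^2$ and observing that the resulting integral $\int_0^\infty e^{-s}s^{2\sigma-1-n_1/2}\,ds$ converges precisely when $\sigma>n_1/4$, one obtains the pointwise bound
\begin{equation*}
|K_{(I+t^2L)^{-2\sigma}}(x,x)|\le C\,t^{-n_1}
\end{equation*}
uniformly in $x\in X$ and $t>0$.

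Next I would pass from this diagonal bound to the $2\to\infty$ norm by the standard self-adjointness/duality identity. Since $L$ is self-adjoint, so is $(I+t^2L)^{-\sigma}$, and therefore
\begin{equation*}
\int_X |K_{(I+t^2L)^{-\sigma}}(x,y)|^2\,d\mu(y) = K_{(I+t^2L)^{-2\sigma}}(x,x).
\end{equation*}
Consequently,
\begin{equation*}
\|V_t^{1/2}(I+t^2L)^{-\sigma}\|_{2\to\infty}^2
= \sup_{x\in X} V(x,t)\,K_{(I+t^2L)^{-2\sigma}}(x,x)
\le C\,\Bigl(\sup_{x\in X} V(x,t)\Bigr)\,t^{-n_1}.
\end{equation*}

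Finally I would insert \eqref{e5.21}, which gives $\sup_x V(x,t)\le Ct^{n_1}$ when $t\le 1$ and $\sup_x V(x,t)\le Ct^{n_2}$ when $t\ge 1$. In both regimes
\begin{equation*}
\bigl(\sup_{x\in X} V(x,t)\bigr)\,t^{-n_1} \le C\,(1+t^2)^{(n_2-n_1)/2},
\end{equation*}
and taking square roots yields \eqref{a1} with $\kappa=(n_2-n_1)/4$, as required.

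There is no serious obstacle: the argument is a direct transcription of Lemma~\ref{le2.2}, with the gap between small-scale and large-scale volume growth absorbed by the factor $(1+t^2)^\kappa$. The only point requiring a moment of care is the need for $\sigma>n_1/4$, which is what makes the subordination integral convergent at $s=0$; this is exactly the hypothesis stated in the lemma.
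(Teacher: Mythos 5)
Your proof is correct and is precisely the argument the paper intends: the authors state only that ``the proof is similar to that of Lemma~\ref{le2.2}'' and omit the details, and your write-up is a faithful adaptation of that proof, using the subordination formula, the on-diagonal heat kernel bound $p_\tau(x,x)\le C\tau^{-n_1/2}$ from \eqref{e5.22} (which requires $\sigma>n_1/4$ for convergence of the integral), the self-adjointness identity $\int_X |K_{(I+t^2L)^{-\sigma}}(x,y)|^2\,d\mu(y)=K_{(I+t^2L)^{-2\sigma}}(x,x)$, and the two-scale volume bound \eqref{e5.21} to produce the factor $(1+t^2)^{(n_2-n_1)/2}$.
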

\begin{proof}
The proof is similar to that of Lemma~\ref{le2.2}. We omit the details.
\end{proof}

The following theorem follows from  Theorem~\ref{thm1.1}, \ref{th4.2} and Lemma~\ref{le2.222}.

\begin{theorem}\label{coro5.5}
 Let $X$ and $L$ satisfy conditions~\eqref{e5.21} and \eqref{e5.22}.
Then there exists a constant $C=C(\sigma)>0$ such that estimate \eqref{e3.55} holds
for any $\sigma>n_1/4$ and $\kappa=(n_2-n_1)/4$ and {\rm Spec}$(L_p)$ the spectrum of the operator $L_p$ is independent of the space $L^p(X)$
for all $1 \le p < \infty$.

Moreover, if $F$ is a bounded Borel function such that $\supp F\subset [-1, 1] $
and $F\in  H^s(\mathbb R)$ for some $s>n_2/2+1$,
then the operator $F(L)$ is bounded on $L^p(X)$ and
\eqref{e3.5555} holds with $\kappa=(n_2-n_1)/4$, that is 
 \begin{eqnarray*}
\|F(t\Delta)\|_{p\to p} \leq C(1+t)^{(n_2-n_1)/4}\|F\|_{H^{s}}
\end{eqnarray*}
for all $t>0$.

\end{theorem}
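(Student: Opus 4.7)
My plan is simply to assemble the three ingredients already established immediately above: the finite propagation speed property, the resolvent estimate of Lemma~\ref{le2.222}, and the general Theorems~\ref{thm1.1} and~\ref{th4.2}. Nothing new has to be proved from scratch, so the proposal is really a choice of parameters.

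First I would invoke the preceding lemma, which shows that \eqref{e5.21} and \eqref{e5.22} imply Davies--Gaffney estimates \eqref{dg} and hence the finite propagation speed property \eqref{fs}. In parallel, Lemma~\ref{le2.222} supplies the resolvent-type estimate \eqref{a1} with any $\sigma > n_1/4$ and the fixed exponent $\kappa = (n_2-n_1)/4$. Both hypotheses of Theorem~\ref{thm1.1} being in force, assertion (i) of that theorem delivers the off-diagonal estimate \eqref{e3.55} with this choice of $\sigma$ and $\kappa$.

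For the spectral multiplier statement I need to match the threshold $s > n_2/2 + 1$ appearing in the theorem with the exponent $s > 2\sigma + 2\kappa + 1$ required by Theorem~\ref{thm1.1}(ii). Since $2\kappa = (n_2-n_1)/2$, given $s > n_2/2 + 1$ I can select $\sigma > n_1/4$ close enough to $n_1/4$ that
\[
2\sigma + 2\kappa + 1 \;=\; 2\sigma + \tfrac{n_2-n_1}{2} + 1 \;<\; s.
\]
Theorem~\ref{thm1.1}(ii) then gives $\|F(tL)\|_{p\to p} \leq C(1+t)^{(n_2-n_1)/4}\|F\|_{H^s}$ for every $F$ supported in $[-1,1]$ with $F \in H^s(\mathbb{R})$, which is exactly the claim. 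The spectral independence assertion is read off directly from Theorem~\ref{th4.2}, whose two hypotheses (finite propagation speed and \eqref{a1}) have already been verified.

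There is essentially no obstacle; the only point worth flagging is the parameter selection in the previous paragraph, i.e.\ the observation that the freedom to take $\sigma$ arbitrarily close to $n_1/4$ in Lemma~\ref{le2.222} is precisely what turns the abstract threshold $s > 2\sigma + 2\kappa + 1$ of Theorem~\ref{thm1.1}(ii) into the sharp-looking Sobolev condition $s > n_2/2 + 1$ stated in Theorem~\ref{coro5.5}.
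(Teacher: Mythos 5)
Your proof is correct and follows exactly the same route as the paper, which simply cites Theorems~\ref{thm1.1}, \ref{th4.2} and Lemma~\ref{le2.222} without further elaboration. You have usefully spelled out the parameter-matching step, namely that taking $\sigma$ arbitrarily close to $n_1/4$ in Lemma~\ref{le2.222} makes the abstract threshold $s>2\sigma+2\kappa+1$ of Theorem~\ref{thm1.1}(ii) reduce to $s>n_2/2+1$, which the paper leaves implicit.
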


\begin{remark}
{\rm In \cite[Lemma 2]{Da} Davies proved that under assumptions \eqref{e5.21} and \eqref{e5.22} the following 
estimate holds 
 \begin{eqnarray}\label{e3.6}
\|e^{i\xi tL}e^{-t L}\|_{1\to 1}\leq C(1+\xi^2)^{n_2\over 2 } e^{\epsilon t}
\end{eqnarray}
for all $t>0$ and $\xi\in \mathbb{R}$.
Theorem~\ref{coro5.5} improves the above estimates  and shows that 
 \begin{eqnarray}
\|e^{i\xi tL}e^{-t L}\|_{1\to 1}\leq C(1+\xi^2)^{s } (1+t)^{(n_2-n_1)/4}
\end{eqnarray}
for any $s>n_2/4+1/4$. As a consequence 
the estimates from Lemma~\ref{le4.1} improve the complex time resolvent estimates described in 
\cite[Lemma 3]{Da}.
}
\end{remark}

\medskip

\subsection{Ambient spaces satisfying the  doubling condition }\label{sub5.4}
Let $(X,d,\mu)$ be a metric measure space satisfying doubling condition with homogenous dimension $n$.


%

The following statement is a direct consequence of Corollary 4.16 of \cite{CouS}.

\begin{proposition}\label{coro5.12}
Let $X$ satisfy the doubling condition \eqref{vd} with exponent $n$.
Suppose next the wave propagator corresponding to $L$ satisfies finite speed property \eqref{fs} and 
that condition \eqref{a1} holds for some $\sigma >0$ and $\kappa=0$ that is 
\begin{eqnarray*}
\|V^{1/2}_t(I+t^2L)^{-\sigma}\|_{2 \to \infty} \le C.
\end{eqnarray*}
  
Then   there exists a constant $C>0$ such that
 \begin{eqnarray}\label{e3.555}
\|e^{i\xi tL}e^{-t L}\|_{1\to 1}\leq C(1+\xi^2)^{n/4} 
\end{eqnarray}
for all $t>0$ and $\xi\in{\R}$.
\end{proposition}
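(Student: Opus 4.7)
The plan is to reduce the statement to a complex-time heat-semigroup bound and then invoke Corollary~4.16 of \cite{CouS}. First, I may assume $\sigma>n/4$: if $\sigma\le n/4$, pick any $\sigma'>n/4$ and write $(I+t^2L)^{-\sigma'}=(I+t^2L)^{-\sigma}(I+t^2L)^{\sigma-\sigma'}$; the second factor is a contraction on $L^2$, so the hypothesis \eqref{a1} with $\kappa=0$ passes from $\sigma$ to $\sigma'$. With $\sigma>n/4$ at hand, Lemma~\ref{prop2} converts \eqref{a1} into the on-diagonal heat-kernel estimate $\|V_t^{1/2}e^{-t^2L}\|_{2\to\infty}\le C$, which by duality is equivalent to $p_{t^2}(x,x)\le C/V(x,t)$.

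Next, since the finite propagation speed assumption is equivalent to the Davies-Gaffney estimate \eqref{dg}, combining \eqref{dg} with the on-diagonal bound via a standard Gaffney-type $L^2$ argument (together with the doubling condition) yields the full pointwise Gaussian upper bound
$$|p_t(x,y)|\le \frac{C}{V(x,\sqrt{t})}\exp\Big(-\frac{d(x,y)^2}{ct}\Big),\qquad t>0,\ x,y\in X.$$

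The crucial third step is a Phragm\'en-Lindel\"of-type extension of this bound from the positive real axis to the right half-plane $\{\Re z>0\}$, which is exactly the content of Corollary~4.16 of \cite{CouS}. It yields
$$|p_z(x,y)|\le \frac{C}{V(x,|z|^{1/2})}\Big(\frac{|z|}{\Re z}\Big)^{n/2}\exp\Big(-c\,\frac{(\Re z)\,d(x,y)^2}{|z|^2}\Big).$$
Integrating in $x$ and using the doubling condition \eqref{vd} to absorb the mismatch between the Gaussian scale $|z|/\sqrt{\Re z}$ and the volume scale $|z|^{1/2}$ then gives $\|e^{-zL}\|_{1\to 1}\le C(|z|/\Re z)^{n/2}$.

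Finally, setting $z=t(1-i\xi)$ identifies $e^{-zL}$ with $e^{i\xi tL}e^{-tL}$; since $\Re z=t$ and $|z|=t\sqrt{1+\xi^2}$, one has $(|z|/\Re z)^{n/2}=(1+\xi^2)^{n/4}$, which is exactly \eqref{e3.555}. The main obstacle is the complex-time extension in the third step; the remaining work amounts to organising already-available tools. As a sanity check, the exponent $n/4$ matches the Euclidean computation, where the kernel of $e^{-z\Delta}$ is explicit and direct integration yields $\|e^{-z\Delta}\|_{1\to 1}=(|z|/\Re z)^{n/2}$, so the bound is sharp in terms of the doubling dimension.
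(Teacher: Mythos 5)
Your overall strategy — reduce hypothesis \eqref{a1} to the on-diagonal heat-kernel bound $\|V^{1/2}_t e^{-t^2L}\|_{2\to\infty}\le C$ and then invoke Corollary~4.16 of \cite{CouS} — is precisely what the paper does. However, two remarks are in order.

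First, your opening reduction to $\sigma>n/4$ is unnecessary. The paper simply writes
$$\|V^{1/2}_t e^{-t^2L}\|_{2 \to \infty} \le \|V^{1/2}_t(I+t^2L)^{-\sigma}\|_{2 \to \infty}\,
\|e^{-t^2L}(I+t^2L)^{\sigma} \|_{2 \to 2}\le C\|V^{1/2}_t(I+t^2L)^{-\sigma}\|_{2 \to \infty},$$
and $\|e^{-t^2L}(I+t^2L)^{\sigma}\|_{2\to 2}=\sup_{\lambda\ge 0}e^{-\lambda}(1+\lambda)^{\sigma}<\infty$ for \emph{every} $\sigma>0$; you only need the forward implication in Lemma~\ref{prop2}, and that direction never uses $\sigma>n/4$. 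Your workaround is correct but adds a step the paper avoids.

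Second, and more substantively, the quantitative reconstruction you give of the mechanism inside \cite[Corollary~4.16]{CouS} is internally inconsistent. You assert the complex-time kernel bound
$$|p_z(x,y)|\le \frac{C}{V(x,|z|^{1/2})}\Big(\frac{|z|}{\Re z}\Big)^{n/2}\exp\Big(-c\,\frac{(\Re z)\,d(x,y)^2}{|z|^2}\Big)$$
and then claim that integrating in $x$ ``absorbs'' the scale mismatch. It does not: the Gaussian lives at scale $R=|z|/\sqrt{\Re z}$ while the volume sits at scale $r=|z|^{1/2}$, and since $R\ge r$ the doubling condition yields
$$\int_X \frac{1}{V(x,r)}\exp\Big(-c\frac{d(x,y)^2}{R^2}\Big)\,d\mu(x)\le C\Big(\frac{R}{r}\Big)^n=C\Big(\frac{|z|}{\Re z}\Big)^{n/2}.$$
Multiplying by the polynomial prefactor you already carry gives $\|e^{-zL}\|_{1\to1}\le C(|z|/\Re z)^{n}$, i.e.\ $(1+\xi^2)^{n/2}$ — twice the exponent in \eqref{e3.555}. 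The resolution is that the prefactor $(|z|/\Re z)^{n/2}$ should not be present once the volume is normalized at scale $|z|^{1/2}$; indeed, in $\R^n$ one has $|p_z|=(4\pi|z|)^{-n/2}\exp(-\Re z\,|x-y|^2/(4|z|^2))$, so with $V(x,|z|^{1/2})\approx|z|^{n/2}$ the sharp pointwise bound carries no such factor, and integration alone produces $(|z|/\Re z)^{n/2}=(1+\xi^2)^{n/4}$. If you only intend to cite Corollary~4.16 as a black box your proof is fine and essentially identical to the paper's one-paragraph argument; but the intermediate pointwise estimate as you wrote it would not deliver the stated exponent.
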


\begin{proof}
Estimates \eqref{e3.555} are proved in \cite[Corollary 4.16]{CouS} under condition 
$\|V^{1/2}_t e^{-t^2L}\|_{2 \to \infty}$.
However 
$$
\|V^{1/2}_t e^{-t^2L}\|_{2 \to \infty} \le \|V^{1/2}_t(I+t^2L)^{-\sigma}\|_{2 \to \infty}  
\|e^{-t^2L}(I+t^2L)^{\sigma} \|_{2 \to 2}  \le C \|V^{1/2}_t(I+t^2L)^{-\sigma}\|_{2 \to \infty}
$$
compare  Lemma~\ref{prop2}. Hence Proposition \ref{coro5.12} follows directly from \cite[Corollary 4.16]{CouS}.
\end{proof}

\begin{remark}\label{rm5.112}

{\rm (A). The direct application of Theorem \ref{thm1.1} yields the estimate \eqref{e3.555} with exponent 
$\sigma +1/4$ instead of $n/4$. Note that one usually expect that $\sigma > n/4$, see Lemma~\ref{prop2}.
However the proof of Theorem \ref{thm1.1} is still valid if doubling condition fails and is essentially less complex than the proof of  \cite[Corollary 4.16]{CouS}. See also \cite[Theorem 7.3, Proposition 7.9]{O}.

(B). In the doubling setting the estimates $\|V^{1/2}_t e^{-t^2L}\|_{2 \to \infty} \le C $ and the finite 
speed propagation property \eqref{fs} are equivalent to the standard Gaussian upper bound for the heat kernel  corresponding  to the operator $L$
\begin{eqnarray}\label{e5.41}
|p_t(x,y)|\leq C \frac{1}{V(y,\sqrt t)}\exp\left\{-c{d(x,y)^2\over t}\right\}
\end{eqnarray}
see  e.g. \cite{Si}. Therefore  Proposition~\ref{coro5.12} can be reformulated in term of Gaussian upper bound
assumption. 

(C). Again  in the doubling setting, the similar results can be obtained using the techniques from \cite{CowS, DOS, COSY} but the proof is also more complex than our approach. Vice versa, Theorem~\ref{thm1.1} yields alternative proof 
of compactly supported version of the results described in \cite{CowS, DOS, COSY}. However the number of 
required derivatives will be slightly bigger in this approach. 
}
\end{remark}

\medskip

 \noindent
{\bf Acknowledgments.}
 The authors would like to thank X.T. Duong and J. Li for helpful discussions.
P. Chen was partially  supported by NNSF of China 11501583, Guangdong Natural Science Foundation 2016A030313351 and the Fundamental Research Funds for the Central Universities 161gpy45.  A.~Sikora is supported by
Australian Research Council  Discovery Grant DP 130101302.
 L.X.~Yan is supported by the NNSF
of China, Grant Nos.~11371378 and  ~11521101).

\bigskip

\end{document}